\documentclass[reqno]{amsart}

\newtheorem{theorem}{Theorem}[section]

\newtheorem{lemma}[theorem]{Lemma}

\theoremstyle{definition}

\newtheorem{example}[theorem]{Example}

\theoremstyle{plain}

\theoremstyle{remark}
\newtheorem{remark}[theorem]{Remark}


\usepackage{amssymb}
\usepackage{amsmath}
\usepackage{amsthm}
\usepackage{color}
\usepackage{graphicx}
\usepackage{xcolor}
\usepackage{subcaption}
\usepackage{enumerate}
\usepackage[draft]{todonotes}   
\usepackage{tabularx}

\newcommand{\ri}{\mathrm{i}}

\title[Global Well-Posedness GPJ]{On the Global Well-Posedness of the Inviscid Generalized Proudman--Johnson Equation Using Flow map Arguments}


\author{Florian Kogelbauer}
\address{Institute for Mechanical Systems, ETH Z\"{u}rich, Leonhardstrasse 21, 8092 Z\"{u}rich, Switzerland}
\email{floriank@ethz.ch}


\begin{document}
\maketitle
\today
\begin{abstract}
	We reformulate the Generalized Proudman--Johnson (GPJ) equation with parameter $a$ in Lagrangian variables, where it takes the form of an inhomogeneous Liouville equation. This allows us to provide an explicit formula for the flow map, up to the solution of an ODE. Depending on the parameter $a$, we prove new criteria for global existence or formation of a finite-time singularity and re-derive results from the literature. In particular, we show that there exist smooth initial data which become singular in finite time for $a>1$. We also give a physical derivation of the GPJ equation for general parameter values of $a$.
\end{abstract}

\section{Introduction}
\subsection{General Overview}
We will be concerned with the one-parameter family of partial differential equations
\begin{equation}\label{GPJa}
\begin{cases}
&u_{txx}+uu_{xxx}-au_xu_{xx}=0,\\
&\left.u\right|_{t=0}=u^0,
\end{cases}
\end{equation}
the \textit{generalized Proudman--Johnson equation} (henceforth abbreviated as GPJ equation), for an unknown, scalar function $u$ with initial condition $u^0$ and a parameter $a\in\mathbb{R}$.
We will study equation \eqref{GPJa} together with either \textit{Dirichlet boundary conditions}, i.e., the function $u:[0,1]\times [0,t^*)\to\mathbb{R}$ satisfies
\begin{equation}\label{DirichletBC}
u(0,t)=u(1,t)=0,\qquad t\in [0,t^*),
\end{equation}
or together with \textit{periodic boundary conditions} (normalizing $u$ to have zero mean), i.e., the function $u:\mathbb{R}\times[0,t^*)\to\mathbb{R}$ satisfies
\begin{equation}\label{periodicBC}
\begin{split}
&u(x+1,t)=u(x,t),\qquad (x,t)\in\mathbb{R}\times [0,t^*),\\
&\int_0^1u(x,t)\, dx=0,\qquad t\in[0,t^*).
\end{split} 
\end{equation}
Throughout, we denote the maximal existence time of a solution to equation \eqref{GPJa} as $t^*$.\\
Equation \eqref{GPJa} was introduced in \cite{okamoto2000} for specific parameter values of $a$ as a model derived from high-dimensional Navier--Stokes equations with certain symmetries. Mainly, however, equation \eqref{GPJa} has been introduced as a mathematical extension to the inviscid Proudman--Johnson equation ($a=1$), cf. \cite{childress_ierley_spiegel_young_1989}, which models inviscid, incompressible fluids close to a wall.\\
Indeed, defining the mock vorticity as $\omega:=u_{xx}$, equation \eqref{GPJa} becomes
\begin{equation}
\omega_t+u\omega_x=a\omega u_x,
\end{equation}
which can be interpreted as an $a$-weighted scalar toy-model for the three-dimensional, inviscid, incompressible vorticity equation, cf. \cite{chorin2012mathematical}. For different values of the parameter $a$, either the transport term $u\omega_x$ or the stretching term $\omega u_x$ becomes dominant. Therefore, understanding global existence and blow-up scenarios for equation \eqref{GPJa} is expected to shed light on the role and interplay of nonlinear transport and nonlinear stretching terms in global well-posedness of solutions.\\
Local well-posedness of equation \eqref{GPJa} in the periodic regime was established in \cite{okamoto2000} (cf. also \cite{Okamoto2009}):
\begin{theorem}[Theorem 2.1 in \cite{Okamoto2009}]
For any $u_{xx}^0\in L^2(0,1)/\mathbb{R}$ there exists $T>0$  and a solution of \eqref{GPJa}, satisfying periodic boundary conditions, unique in the class
\begin{equation}
u_{xx}\in C\Big([0,T];L^2(0,1)/\mathbb{R}\Big)\cap C^1_w\Big([0,T];H^{-1}(0,1)/\mathbb{R}\Big),
\end{equation}
where the subscript $w$ implies weak topology. If, in addition, $u_{xx}^0\in H^m(0,1)/\mathbb{R}$ with $m=1,2,...,$ then $u_{xx}\in C^0\Big([0,T];H^m(0,1)/\mathbb{R}\Big)$.
\end{theorem}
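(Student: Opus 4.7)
The plan is to rephrase \eqref{GPJa} as a transport--stretching equation for the vorticity $\omega=u_{xx}$: integrating \eqref{GPJa} once (as noted already in the excerpt) gives
\begin{equation*}
\omega_t+u\omega_x=a u_x \omega.
\end{equation*}
For periodic, zero-mean $u$, inverting $\partial_x^2$ yields a bounded operator $K:L^2(0,1)/\mathbb{R}\to H^2(0,1)$ with $u=K[\omega]$, and the one-dimensional Sobolev embedding $H^1\hookrightarrow L^\infty$ gives $\|u_x\|_{L^\infty}\le C\|\omega\|_{L^2}$. The zero-mean constraint $\int_0^1\omega\,dx=0$ is automatic from periodicity of $u_x$, so the quotient structure is preserved under the flow.

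I would begin by constructing approximate solutions $\omega^N$ through a Fourier--Galerkin truncation to the first $N$ nonzero modes, which is a locally-in-time solvable ODE on a finite-dimensional space. Testing the truncated equation against $\omega^N$ and using $\int u\omega\omega_x\,dx=-\tfrac{1}{2}\int u_x\omega^2\,dx$ on the transport term yields
\begin{equation*}
\tfrac{1}{2}\tfrac{d}{dt}\|\omega^N\|_{L^2}^2=\Bigl(a+\tfrac{1}{2}\Bigr)\int_0^1 u_x (\omega^N)^2\,dx\le C_a\|\omega^N\|_{L^2}^3,
\end{equation*}
so a standard comparison principle provides a time $T=T(\|\omega^0\|_{L^2})>0$ on which the $\omega^N$ are uniformly bounded in $L^\infty(0,T;L^2/\mathbb{R})$. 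The equation itself then controls $\partial_t\omega^N$ in $H^{-1}/\mathbb{R}$, so Banach--Alaoglu combined with Aubin--Lions extracts a subsequence converging weakly-$*$ in $L^\infty(0,T;L^2/\mathbb{R})$ and strongly in $C([0,T];H^{-1}/\mathbb{R})$ to a limit $\omega$ which, by passing to the limit in the nonlinearity, solves the equation. Weak continuity $u_{xx}\in C^1_w([0,T];H^{-1}/\mathbb{R})$ is then read off directly from the equation.

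Uniqueness follows from an $L^2$ estimate for the difference of two solutions: setting $\delta\omega=\omega_1-\omega_2$ and $\delta u=K[\delta\omega]$, one gets a linear transport equation
\begin{equation*}
\delta\omega_t+u_1\delta\omega_x=a u_{1,x}\delta\omega+a\omega_2\delta u_x-\delta u\,\omega_{2,x},
\end{equation*}
and testing against $\delta\omega$ (integrating the last term by parts to absorb the $\omega_{2,x}$) together with $\|\delta u\|_{H^2}\le C\|\delta\omega\|_{L^2}$ gives $\tfrac{d}{dt}\|\delta\omega\|_{L^2}^2\le C(\omega_1,\omega_2)\|\delta\omega\|_{L^2}^2$ and hence $\delta\omega\equiv 0$ by Gr\"onwall. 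Persistence of the $H^m$-regularity is obtained analogously by applying $\partial_x^m$ to the equation, testing against $\partial_x^m\omega$, and controlling the commutator $[\partial_x^m,u]\omega_x$ through a Moser--Kato--Ponce estimate, which closes to $\tfrac{d}{dt}\|\omega\|_{H^m}^2\le C_m(1+\|\omega\|_{L^2})\|\omega\|_{H^m}^2$; strong continuity in $H^m$ then follows from a Bona--Smith mollification argument.

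The main obstacle is the stretching term $a u_x\omega$: unlike the pure transport, it survives the $L^2$ pairing and produces the cubic right-hand side $\|\omega\|_{L^2}^3$, which is precisely why the method yields only a local existence time depending on the initial size. A secondary technical point is upgrading weak-$*$ to strong convergence of the nonlinearity $u^N\omega^N_x$ in the limit $N\to\infty$, which is handled by using the smoothing of $K$ (two derivatives) to obtain strong convergence of $u^N$ in $C([0,T];C^1)$, against which weak convergence of $\omega^N_x$ suffices to identify the limit.
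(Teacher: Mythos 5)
First, note that the paper itself does not prove this theorem; it is quoted from \cite{Okamoto2009}, so your proposal has to be measured against the standard argument in that reference rather than against anything in the text. Your Galerkin construction and the $L^2$ energy estimate are the right skeleton: the identity $\int u\omega\omega_x\,dx=-\tfrac12\int u_x\omega^2\,dx$, the cubic bound via $\|u_x\|_{L^\infty}\le C\|\omega\|_{L^2}$ (valid because $u_x$ has zero mean), and the Aubin--Lions compactness step are all sound, and the identification of the limit of $u^N\omega^N_x$ via the smoothing of $K$ works after an interpolation between the $L^\infty_tL^2_x$ bound and the strong $C_tH^{-1}_x$ convergence. One omission on the existence side: this scheme a priori delivers only $u_{xx}\in C_w([0,T];L^2)$, and upgrading to the asserted \emph{strong} continuity in $L^2$ requires its own argument (continuity of $t\mapsto\|\omega(t)\|_{L^2}$ from the energy balance, or the same Bona--Smith device you invoke for $H^m$).

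The genuine gap is the uniqueness step. In your difference equation the term $-\delta u\,\omega_{2,x}$ cannot be absorbed by testing against $\delta\omega$ in $L^2$: for a solution in the stated class $\omega_{2,x}$ exists only in $H^{-1}$, and integrating by parts produces $\int_0^1\omega_2\,\delta u\,\delta\omega_x\,dx$, which pairs the distribution $\delta\omega_x\in H^{-1}$ against the product $\omega_2\,\delta u$ that is merely $L^2$; there is no bound by $\|\delta\omega\|_{L^2}^2$ and no cancellation with the other terms. (This is precisely the obstruction to naive $L^2$-vorticity uniqueness for 2D Euler.) The estimate closes one derivative lower: integrate the equation once to $u_{tx}+uu_{xx}-\tfrac{a+1}{2}u_x^2=-\tfrac{a+3}{2}\|u_x\|_{L^2(0,1)}^2$, write the difference equation for $w=u_1-u_2$, and test against $w_x$. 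Then the dangerous term becomes $\int_0^1 w\,u_{2,xx}\,w_x\,dx\le\|w\|_{L^\infty}\|u_{2,xx}\|_{L^2}\|w_x\|_{L^2}\le C\|w_x\|_{L^2}^2$, the nonlocal term drops out against the mean-free $w_x$, and Gr\"onwall applies to $\|w_x\|_{L^2}^2$, i.e.\ to an $H^{-1}$-type norm of $\delta\omega$ rather than $\|\delta\omega\|_{L^2}$. With that substitution your outline is correct; as written, the uniqueness argument would fail.
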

Different global-existence and blow-up scenarios for equation \eqref{GPJa} have been analyzed. The following theorem gives an overview of some criteria for periodic solutions.

\begin{theorem}[Theorem 3.3 in \cite{Okamoto2009}]\label{Okamototheorem}
	The following statements hold true:
\begin{enumerate}
	\item Suppose that $a<-2$ and that $\int_0^1\Big(u_x^0(s)\Big)^3\,ds<0$. Then $\|u_x(t)\|_{L^2(0,1)}$ blows up in finite time.
	\item Suppose that $-2\leq a<-1$. Then $u_x$ remains bounded in $L^2$-norm but blows up in finite time in $L^\infty$ norm unless $u\equiv 0$.
	\item Suppose that $-1\leq a<0$ and that $u_{xx}^0\in L^{-\frac{1}{a}}(0,1)/\mathbb{R}$. Then the solution exists globally in time.
	\item Suppose that $0\leq a<1$ and that $u_{xxx}^0\in L^{\frac{1}{1-a}}(0,1)/\mathbb{R}$. Then the solution exists globally in time.
\end{enumerate}
\end{theorem}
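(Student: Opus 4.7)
My approach is to work entirely in Lagrangian coordinates. Defining the flow $\eta(t,\alpha)$ by $\partial_t\eta=u(\eta,t)$, $\eta(0,\alpha)=\alpha$, the mock-vorticity equation $(u_{xx})_t+u(u_{xx})_x=au_xu_{xx}$ together with $\partial_t\eta_\alpha=(u_x\circ\eta)\eta_\alpha$ integrates to the multiplicative identity
\[
u_{xx}(\eta(t,\alpha),t)=u^0_{xx}(\alpha)\,\eta_\alpha(t,\alpha)^{a}.
\]
One antiderivative in $x$ of \eqref{GPJa} yields $u_{tx}+uu_{xx}=\tfrac{a+1}{2}u_x^2+c(t)$, where the integration constant is fixed by integrating over $[0,1]$ (both boundary regimes give the same answer), producing $c(t)=-\tfrac{a+3}{2}\|u_x\|_{L^2}^2$. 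Along characteristics this becomes the non-autonomous Riccati ODE
\[
\dot v=\tfrac{a+1}{2}v^2+c(t),\qquad v(t,\alpha):=u_x(\eta(t,\alpha),t),
\]
one equation per Lagrangian fiber $\alpha$.

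For Parts (1) and (2) I would track $I(t):=\int_0^1 u_x^3\,dx$ and $E(t):=\|u_x\|_{L^2}^2$. Integration by parts yields the closed identities $\dot E=(a+2)I$ and $\dot I=\tfrac{3a+5}{2}\|u_x\|_{L^4}^4-\tfrac{3(a+3)}{2}E^2$. For (1), with $a<-2$, Cauchy--Schwarz gives $E^2\le\|u_x\|_{L^4}^4$, while Jensen combined with $|I|\le\|u_x\|_{L^3}^3$ gives $|I|^{4/3}\le\|u_x\|_{L^4}^4$; a short case split on the sign of $a+3$ then produces $\dot I\le -\kappa|I|^{4/3}$ with $\kappa>0$. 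Since $I(0)<0$, this preserves $I<0$ and forces $|I|\to\infty$ in finite time, after which $\dot E=|a+2|\cdot|I|$ drives $E$ to infinity. For (2), the case $a=-2$ gives $\dot E\equiv 0$ and hence conservation of $\|u_x\|_{L^2}$ directly. The $L^\infty$ blow-up on the whole range $-2\le a<-1$ follows from the Riccati ODE: here $a+1<0$ and $a+3>0$ give $c(t)\le 0$, so $\dot v\le\tfrac{a+1}{2}v^2\le 0$; the zero-mass constraint $\int u_x\,dx=0$ rules out $v(0,\cdot)\equiv 0$ unless $u\equiv 0$, so some $\alpha_0$ has $v(0,\alpha_0)<0$, and on that fiber $V:=-v$ satisfies $\dot V\ge \tfrac{|a+1|}{2}V^2$, blowing up in finite time.

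For Parts (3) and (4), I would choose the integrability exponent that trivializes the Lagrangian weight. The change of variables gives $\|u_{xx}\|_{L^p}^p=\int_0^1|u^0_{xx}|^p\eta_\alpha^{ap+1}\,d\alpha$, and the choice $p=-1/a\in[1,\infty)$ (legal precisely for $-1\le a<0$) yields exact conservation $\|u_{xx}(t)\|_{L^{-1/a}}=\|u^0_{xx}\|_{L^{-1/a}}$; combined with a one-dimensional Sobolev embedding and a Beale--Kato--Majda-style continuation criterion based on $\|u_x\|_{L^\infty}$, this rules out finite-time blow-up and proves (3). For (4), differentiating the mock-vorticity equation once more gives $\partial_t(u_{xxx}\circ\eta)=(a-1)v(u_{xxx}\circ\eta)+a(u_{xx}\circ\eta)^2$, and the integrating factor $\eta_\alpha^{1-a}$ turns this into
\[
\partial_t\bigl((u_{xxx}\circ\eta)\,\eta_\alpha^{1-a}\bigr)=a(u^0_{xx})^2\,\eta_\alpha^{a+1}.
\]
Choosing $p=1/(1-a)\in[1,\infty)$ (legal for $0\le a<1$) gives $\|u_{xxx}\|_{L^{1/(1-a)}}^{1/(1-a)}=\int_0^1\bigl|u^0_{xxx}(\alpha)+a(u^0_{xx}(\alpha))^2\!\int_0^t\eta_\alpha^{a+1}\,ds\bigr|^{1/(1-a)}\,d\alpha$, and a time-dependent \emph{a priori} bound on $\int_0^1\eta_\alpha^{a+1}\,d\alpha$ (bootstrapped from the pointwise identity $\int_0^1\eta_\alpha\,d\alpha=1$) closes the continuation argument.

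The hard part is (2) in the strict range $-2<a<-1$, where $\dot E=(a+2)I$ is nonzero, so conservation fails and one must still show $|I|\in L^1_t$ despite the Riccati $L^\infty$ blow-up of $u_x$. The required cancellation is geometric: at the blow-up fiber $\alpha_0$, $\eta_\alpha\to 0$ while $|v|\to\infty$, and the Lagrangian representation $I=\int_0^1 v^3\,\eta_\alpha\,d\alpha$ profits from compensating weights. Quantifying this cancellation uniformly, taking into account the full $\alpha$-profile of the emerging singularity rather than just the leading fiber, is the most delicate step of the plan; a secondary obstacle is deriving the correct a priori growth of $\int_0^1\eta_\alpha^{a+1}\,d\alpha$ needed in (4), since only the lower bound $\int_0^1\eta_\alpha^{a+1}\,d\alpha\ge 1$ is immediate from Jensen.
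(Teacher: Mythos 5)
First, a point of orientation: the paper does not prove this statement at all --- it is quoted verbatim as Theorem 3.3 of Okamoto's work and used only as a benchmark against which Theorem \ref{mainthm} is compared. So there is no in-paper proof to match your proposal against; the closest relative is the Lagrangian/flow-map machinery of Sections \ref{Reform}--\ref{SolutionFormula}, whose basic identity $u_{xx}(F(\xi,t),t)=u_{xx}^0(\xi)F_\xi(\xi,t)^a$ is exactly your multiplicative identity. Your identities check out: the once-integrated equation $u_{tx}+uu_{xx}=\tfrac{a+1}{2}u_x^2+c(t)$ with $c(t)=-\tfrac{a+3}{2}\|u_x\|_{L^2}^2$, the balance laws $\dot E=(a+2)I$ and $\dot I=\tfrac{3a+5}{2}\|u_x\|_{L^4}^4-\tfrac{3(a+3)}{2}E^2$, and the integrating-factor computation in (4) are all correct, and your argument for (1) (including the case split on the sign of $a+3$) and for the $L^\infty$ blow-up in (2) are sound.

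The genuine gap is the one you flag yourself: the $L^2$ boundedness in (2) for $-2<a<-1$. But the ``delicate Lagrangian cancellation'' you anticipate is not needed; the gap closes with tools already on your table. Your Riccati inequality $\dot v\le\tfrac{a+1}{2}v^2+c(t)\le 0$ (valid since $a+1<0$ and $c\le 0$ on this range) gives the \emph{pointwise} one-sided bound $u_x(x,t)\le u_{max}:=\max u_x^0$ for all $t$. Combined with $\int_0^1 u_x\,dx=0$ this tames $I$: writing $u_x=u_x^+-u_x^-$, one has $\int (u_x^+)^3\le u_{max}^3$ and, by the power-mean inequality, $\int (u_x^-)^3\ge\bigl(\int(u_x^-)^2\bigr)^{3/2}$, whence
\begin{equation*}
\dot E=(a+2)I\le (a+2)\Bigl[u_{max}^3-\bigl(E-u_{max}^2\bigr)^{3/2}\Bigr]\quad\text{whenever }E\ge u_{max}^2,
\end{equation*}
which is negative once $E>2u_{max}^2$; hence $E(t)\le\max\{E(0),2u_{max}^2\}$. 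A secondary soft spot is (4): what appears inside your $L^{1/(1-a)}$ norm is the fiberwise quantity $\int_0^t\eta_\alpha(s,\alpha)^{a+1}\,ds$, so the bound you need is on $\sup_\alpha\eta_\alpha$, not on $\int_0^1\eta_\alpha^{a+1}\,d\alpha$ (for which Jensen only gives the useless lower bound $\ge 1$). This has to be closed by a genuine bootstrap: $\|u_{xxx}\|_{L^{1/(1-a)}}$ controls $\|u_{xx}\|_{L^1}$, hence $\|u_x\|_{L^\infty}$ via the zero of $u_x$, hence $\eta_\alpha=\exp\int_0^t v\,ds$, feeding back a Gronwall-type bound; as written, the step is only gestured at. Both repairs are routine, but as it stands the proposal does not yet prove items (2) and (4) in full.
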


In \cite{CHO2010392}, existence of global weak solutions to equation \eqref{GPJa} for the parameter range $a=-\frac{n+3}{n+1},\quad n\in\mathbb{N}$, and in \cite{cho2012global} for a general $a\in [-2,-1)$ has been proved by the method of characteristics, devised in \cite{bressan2005global} in the context of the Hunter--Saxton equation.\\
In \cite{Sarria2013,SarriaSaxton2013} different global existence and blow-up scenarios based on a representation formula for the derivative of the velocity field along trajectories where proved for $u_x$. In \cite{Sarria2013,SarriaSaxton2013} it is first noted that $u_x$ along trajectories satisfies a Riccati differential equation, which is equivalent to a second-order ODE for which then, in turn, knowledge of a special solution implies a representation formula for the general solution.\\
In \cite{ConstantinWunsch2009,Wunsch2011} blow-up conditions for $a=1$ and general $a$ were given by a method based on the time evolution of suprema and infima. For the special case $a=1$ (Proudman--Johnson equation), different, more specific blow-up criteria can be obtained, cf. \cite{childress_ierley_spiegel_young_1989} for various methods related to trajectories.\\
For parameters $a>1$, comparably little is known about the formation of a singularity. In \cite{Okamoto2009} special, non-smooth self-similar solutions of the form $u(x,t)=\frac{F(x)}{T-t}$ with blow-up time $T$ where constructed by solving the ODE
\begin{equation}\label{ODeselfsimilar}
F''+FF'''-aF'F''=0,
\end{equation}
for $a>1$.  For more special blow-up solutions, we also refer to \cite{nagayama2002blow}. Equation \eqref{ODeselfsimilar}, however, only admits non-smooth solution for $a>1$, while numerical computations suggest that also smooth initial conditions become singular in finite time for this parameter range, cf. \cite{Okamotopresentation}.

\subsection{Results of the paper}
In this paper, we prove new criteria on global existence and formation of a finite-time singularity of equation \eqref{GPJa} in the class of $x$-Lipschitz continuous velocity fields. We remark that the presented methods also apply to lower regularity solutions.\\
First, the GPJ equation is reformulated in Lagrangian variables (Section \ref{Reform}), where it takes the form of an inhomogeneous Liouville equation. Subsequently, along the same lines as in Liouville's original paper \cite{liouville1853equation}, we solve the equation for the flow map and present an explicit solution formula - explicit up to the solution of an ordinary differential equation (Section \ref{SolutionFormula}). This allows us to infer several global existence and singularity criteria (Section \ref{Criteria}). Specifically, we can report the following improvements and novelties compared to the results in \cite{okamoto2000}, i.e., the assumptions of Theorem \ref{mainthm} compare to the assumptions of our main theorem, Theorem \ref{Okamototheorem}, as follows.
\begin{enumerate}
		\item  The assumptions in Theorem \ref{mainthm} for the parameter regime $a<-3$ only involve integrability constraints of the second derivative of $u^0$ or a comparison of the minimum and the $L^2$-norm of $u_x^0$, which are weaker as compared to the sign assumption on $\int_0^1\Big(u_x^0(s)\Big)^3\, ds$ in Theorem \ref{Okamototheorem}.\\
		
		\item The regime where a singularity forms for any sufficiently smooth initial condition is extended from $[-2,-1)$ in Theorem \ref{Okamototheorem} to $[-3,-1)$ in Theorem \ref{mainthm}.\\
		
		
		\item Theorem \ref{mainthm} provides assumptions that guarantee global existence for $a>-1$ and assumptions that guarantee the formation of a finite-time singularity for $a\geq 1$, even for smooth initial conditions, for which, to the knowledge of the author, no results have been established so far.\\
\end{enumerate}
 The main theorems are illustrated on several examples. A special emphasis is put on solutions with a parabolic initial velocity field, for which the derivative of the flow map satisfies the constant curvature Liouville equation (Section \ref{constantcurvaturechapter}).\\
 Finally, in Section \ref{Appendix}, we derive the GPJ equation for general parameter values $a\in\mathbb{R}$ from the inviscid, compressible Euler equations.


\subsection{Reformulation in Lagrangian Variables}\label{Reform}
In this section, we will reformulate equation \eqref{GPJa} in Lagrangian variables and, subsequently, solve it explicitly in terms of particle trajectories. \\
First, we define the flow associated to the one-dimensional, time dependent vector field $u$, solving equation \eqref{GPJa}, as
\begin{equation}\label{flowu}
\begin{cases}
&F_t(\xi,t)=u(F(\xi,t).t)=:\hat{u}(\xi,t),\quad (\xi,t)\in[0,1]\times [0,t^*),\\
&F(\xi,0)=\xi, \quad \xi\in [0,1].
\end{cases}
\end{equation}
Here, $t^*$ denotes again the maximal existence time for the flow map. We will assume that the initial condition $u^0$ is sufficiently smooth to guarantee existence and uniqueness of solutions to the ODE \eqref{flowu}. In particular, we will assume local well-posedness in the space of $C^1$-functions with Lipschitz continuous derivative, i.e.,
\begin{equation}
u\in C\Big(0,t^*; C^{1,Lip}(0,1)\Big).
\end{equation} 
This allows us to infer that the flow map will have the same regularity as the velocity field in $x$ and, in particular, we can take second derivatives with respect to $x$ almost everywhere. Clearly, also less restrictive assumptions on the velocity field $u$ guarantee the subsequent formulas to hold true.\\
We denote the first and second $x$-derivatives of $u$ along trajectories as
\begin{equation}
\hat{u}_x(\xi,t):=u_x(F(\xi,t),t),\qquad \hat{u}_{xx}(\xi,t):=u_{xx}(F(\xi,t),t),
\end{equation}
where $F$ is a solution to \eqref{flowu}.
Given that the vector field $u$ is $C^1$ with Lipschitz continuous derivative  in $x$ and continuous in $t$, it follows from standard existence and uniqueness theory of ordinary differential equations that the map $F$ defines a local $C^1$-diffeomorphism at any time $t\in [0,\infty)$, which implies that
\begin{equation}\label{posFxi}
F_\xi(\xi,t)> 0, \quad (\xi,t)\in[0,1]\times [0,t^*).
\end{equation}
Indeed, a sign change of $F_\xi$ at some time $t^*$ indicates a break-down of a solution to equation \eqref{flowu}.\\
Assuming Dirichlet boundary conditions \eqref{DirichletBC} the flow map $F$ associated to $u$ has to fixed-points at $\xi=0$ and $\xi=1$, i.e.,
\begin{equation}\label{DirichletF}
F(0,t)=0,\qquad F(1,t)=1,\qquad t\in[0,t^*).
\end{equation}
On the other hand, assuming mean-free periodic boundary conditions \eqref{periodicBC}, the flow map $F(.,t):\mathbb{R}\to\mathbb{R}$ defines a diffeomorphism for every $t\in[0,t^*)$ and we obtain that
\begin{equation}
F(\xi,t)=F(\xi+1,t)-1,\qquad (\xi,t)\in \mathbb{R}\times[0,\infty),
\end{equation} 
thanks to the periodicity of $u$ and the uniqueness of the flow map, or, to put it differently, the map $G(\xi,t):=F(\xi,t)-\xi$ is periodic for $(\xi,t)\in\mathbb{R}\times [0,t^*)$. The mean-free condition on $u$ translates to the following constraint for the flow map $F$:
\begin{equation}\label{flowperiodic}
\begin{split}
0&=\int_0^1u(x,t)\, dx=\int_{F^{-1}(0,t)}^{F^{-1}(0,t)+1}u(F(\xi,t),t)F_\xi(\xi,t)\,d\xi\\
&=\int_{F^{-1}(0,t)}^{F^{-1}(0,t)+1}F_t(\xi,t)F_\xi(\xi,t)\, d\xi,\qquad t\in[0,t^*),
\end{split}
\end{equation}
where we have used the definition of $F$ in \eqref{flowu},  the non-degeneracy condition \eqref{posFxi} and the flow property of $F$.\\

By taking a $\xi$-derivatives in \eqref{flowu}, we obtain expressions for the $x$-derivatives of $u$ along trajectories as
\begin{equation}\label{hatux}
\hat{u}_x(\xi,t)=\frac{F_{t\xi}(\xi,t)}{F_{\xi}(\xi,t)}=\frac{\partial}{\partial t}\log F_\xi(\xi,t),
\end{equation}
where we have used \eqref{posFxi} in the definition of the logarithmic derivative. Taking another $\xi$-derivative in \eqref{hatux}, we obtain
\begin{equation}\label{hatuxx}
\hat{u}_{xx}(\xi,t)=\frac{1}{F_{\xi}(\xi,t)}\left(\frac{F_{t\xi}(\xi,t)}{F_{\xi}(\xi,t)}\right)_{\xi}=\frac{1}{F_{\xi}(\xi,t)}\frac{\partial^2}{\partial t\partial \xi}\log F_\xi(\xi,t).
\end{equation}
The time derivative of $\hat{u}_{xx}$ along trajectories is given by
\begin{equation}\label{hatuxxt}
\begin{split}
(\hat{u}_{xx})_t(\xi,t)&=u_{xxt}(F(\xi,t),t)+\hat{u}_{xxx}(\xi,t)F_t(\xi,t)\\
&=u_{xxt}(F(\xi,t),t)+\hat{u}_{xxx}(\xi,t)\hat{u}(\xi,t).
\end{split}
\end{equation}
Evaluating \eqref{GPJa} along $x=F(\xi,t)$ and using \eqref{hatuxxt}, we find that
\begin{equation}
\frac{d}{dt}\hat{u}_{xx}=a\hat{u}_x\hat{u}_{xx},
\end{equation}
which can be solved explicitly to
\begin{equation}\label{solvehatuxx}
\hat{u}_{xx}(\xi,t)=u_{xx}^0(\xi)\exp\left\{a\int_0^t\hat{u}_x(\xi,s)\, ds\right\},
\end{equation}
where we have used that $\hat{u}_{xx}(\xi,0)=u_{xx}(F(\xi,0),0)=u_{xx}(\xi,0)=u_{xx}^0(\xi)$. At this point, we note that formula \eqref{solvehatuxx} for $a=1$ appears in \cite{ConstantinWunsch2009} with details of how it was derived.\\
Inserting the expressions for $\hat{u}_{x}$ and $\hat{u}_{xx}$ in terms of the flow map $F$ and its derivatives as derived in \eqref{hatux} and \eqref{hatuxx}, equation \eqref{solvehatuxx} becomes
\begin{equation}\label{reformF}
\begin{split}
\frac{1}{F_{\xi}(\xi,t)}\frac{\partial^2}{\partial t\partial \xi}\log F_\xi(\xi,t)&=u_{xx}^0(\xi)\exp\left\{a\int_0^t\frac{d}{ds}\log F_\xi(\xi,s)\, ds\right\}\\
&=u_{xx}^0(\xi)F_\xi(\xi,t)^a,
\end{split}
\end{equation}
for $(\xi,t)\in [0,1]\times [0,t^*)$, where we have used that $\log F_{\xi}(\xi,0)=\log(1)=0$, by the definition of $F$ in \eqref{flowu}. Multiplying equation \eqref{reformF} by $F_\xi$ and introducing
\begin{equation}
f_a(\xi,t)=F_\xi(\xi,t)^{a+1},
\end{equation}
for $a\in\mathbb{R}\setminus\{-1\}$, we arrive at the equation
\begin{equation}\label{Liouvilleinhom}
\frac{\partial^2}{\partial t\partial \xi}\log f_a(\xi,t)=(a+1)u_{xx}^0(\xi)f_a(\xi,t),\qquad (a\neq-1)
\end{equation}
for $(\xi,t)\in [0,1]\times [0,t^*)$ and $a\neq -1$, while for $a=-1$, we simply obtain
\begin{equation}\label{a=-1}
\frac{\partial^2}{\partial t\partial \xi}\log F_\xi(\xi,t)=u_{xx}^0(\xi),\qquad (a=-1),
\end{equation}
for $(\xi,t)\in [0,1]\times [0,t^*)$.\\
\begin{remark}
Equation \eqref{Liouvilleinhom} is an inhomogeneous Liouville equation with  $(a+1)u^0_{xx}(\xi)$ as a curvature term. For $(a+1)u^0_{xx}(\xi)=const.$,  the solution $f_a$ describes the conformal factor of a metric on a surface with constant curvature, cf. \cite{liouville1853equation}. In Section \ref{constantcurvaturechapter}, we solve the constant curvature equation for two values of $a$ explicitly, exemplifying this remarkable property of the GPJ equation.   
\end{remark}

\subsection{Solution formula for the flow map $F$}\label{SolutionFormula}
First, we will start with the explicit solution to equation \eqref{a=-1}. Integrating with respect to $x$ and $t$ in \eqref{a=-1}, we find that
\begin{equation}
\log F_\xi(\xi,t)=\Big[u_x^0(\xi)-u_x^0(0)\Big]t+\theta(t)+\rho(\xi),
\end{equation}
for two functions $\rho:[0,1]\to\mathbb{R}$ and $\theta:[0,t^*)\to\mathbb{R}$. Denoting $\theta_0:=\theta(0)$, it follows from the definition of $F$ in \eqref{flowu} that $0=\log (1)=\log F_\xi(\xi,0)=\theta_0+\rho(\xi)$ and hence, after taking an exponential, 
\begin{equation}
F_\xi(\xi,t)=\exp\left\{\theta(t)-\theta_0+\Big[u_x^0(\xi)-u_x^0(0)\Big]t\right\}.
\end{equation}
Integrating once more with respect to $\xi$ we arrive at the expression
\begin{equation}
F(\xi,t)=\int_0^\xi \exp\left\{\theta(t)-\theta_0+\Big[u_x^0(l)-u_x^0(0)\Big]t\right\}\, dl+\mu(t),
\end{equation}
for some function $\mu:[0,t^*)\to\mathbb{R}$.\\
Assuming Dirichlet boundary conditions, it follows that $\mu(t)\equiv0$, since $F(0,t)=0$ for all $t\in[0,t^*)$, and also that

\begin{equation}
e^{\theta(t)-\theta_0}=\left[\int_0^1 \exp\left\{\Big[u_x^0(l)-u_x^0(0)\Big]t\right\}\, dl\right]^{-1},
\end{equation}

since $F(1,t)=1$ for all $t\in[0,t^*)$. The flow map then takes the from
\begin{equation}
F(\xi,t)=\frac{\int_0^\xi \exp\left\{\Big[u_x^0(l)-u_x^0(0)\Big]t\right\}\, dl}{\int_0^1 \exp\left\{\Big[u_x^0(l)-u_x^0(0)\Big]t\right\}\, dl},
\end{equation}
or, after cancellation of the term $e^{-u^0_x(0)t}$,
\begin{equation}\label{flowa=0}
F(\xi,t)=\frac{\int_0^\xi \exp\left\{u_x^0(l)t\right\}\, dl}{\int_0^1 \exp\left\{u_x^0(l)t\right\}\, dl}.
\end{equation}
\medskip

We now turn to the case $a\neq -1$. To facilitate notation and to emphasize the connection to the geometric nature of equation \eqref{Liouvilleinhom}, we set
\begin{equation}
K_a(\xi)=(a+1)u_{xx}^0(\xi),\qquad  \xi\in[0,1]. 
\end{equation}
In our analysis, we follow the original approach of Liouville as outlined in \cite{liouville1853equation} (in the cited paper, however, only the constant curvature case $K_a=const.$ is considered).\\
Let $f_a=\frac{\partial g_a}{\partial t}$, for some function $g_a:[0,1]\times [0,t^*)\to\mathbb{R}$. As $K_a$ is $t$-independent, equation \eqref{Liouvilleinhom} can be integrated with respect to $t$:
\begin{equation}\label{solveLiouville1}
\frac{\partial }{\partial\xi}\log\frac{\partial g_a}{\partial t}=\left(\frac{\partial g_a}{\partial t}\right)^{-1}\frac{\partial^2 g_a}{\partial t\partial \xi}=K_ag_a+\rho_1,
\end{equation}
for some function $\xi\mapsto\rho_1(\xi), \rho_1:[0,1]\to\mathbb{R}$. Multiplying equation \eqref{solveLiouville1} with $\frac{\partial g_a}{\partial t}$ and integrating once more with respect to $t$, we have that
\begin{equation}\label{solveLiouville2}
\frac{\partial g_a}{\partial \xi}=\frac{K_a}{2}g_a^2+\rho_1g_a+\rho_2,
\end{equation}
for some function $\xi\mapsto\rho_2(\xi), \rho_2:[0,1]\to\mathbb{R}$.\\
 Equation \eqref{solveLiouville2} is a Riccati differential equation in the $\xi$-variable and can be solved by quadrature, once a particular solution is known. So, let $\xi\mapsto h_a(\xi), h:[0,1]\to\mathbb{R}$ be a particular (t-independent) solution to equation \eqref{solveLiouville2} and define
 \begin{equation}\label{solveLiouville4}
 g_a(\xi,t)=h_a(\xi)+\frac{1}{L_a(\xi,t)},\qquad (\xi,t)\in [0,1]\times [0,t^*).
 \end{equation}
 Then $L_a$ satisfies the linear ordinary differential equation
 \begin{equation}\label{solveLiouville3}
 \begin{split}
 \frac{\partial L_a}{\partial\xi}&=-(K_ah_a+\rho_1)L_a-\frac{K_a}{2}\\
 &=:-\rho_3L_a-\frac{K_a}{2}.
 \end{split}
 \end{equation}
 Equation \eqref{solveLiouville3} can be integrated explicitly to
 \begin{equation}
 \begin{split}
 L_a(\xi,t)&=\exp\left\{-\int_0^\xi\rho_3(s)\,ds\right\}\left(\phi(t)-\int_0^\xi\frac{K_a(s)}{2}\exp\left\{\int_0^s\rho_3(l)\,dl\right\}\, ds\right)\\
 &=:\frac{1}{R(\xi)}\left(\phi(t)-\int_0^\xi\frac{K_a(s)}{2}R(s)\, ds\right),
 \end{split}
 \end{equation}
for some function $\phi:[0,t^*)\mapsto \mathbb{R}$ and some function $R:[0,1]\to(0,t^*)$. Setting $\psi(\xi):=-\frac{1}{2}\int_0^\xi K_a(s)R(s)\, ds$, we can express \eqref{solveLiouville4} as
\begin{equation}
g_a(\xi,t)=h_a(\xi)-\frac{2\psi'(\xi)}{K_a(\xi)[\phi(t)+\psi(\xi)]},\qquad (\xi,t)\in[0.1]\times[0,t^*),
\end{equation}
and hence, $f_a$ becomes
\begin{equation}
   f_a(\xi,t)=\frac{2\psi'(\xi)\phi'(t)}{K_a(\xi)[\phi(t)+\psi(\xi)]^2},\qquad (\xi,t)\in[0.1]\times[0,t^*),
\end{equation}
for two arbitrary functions $\phi:[0,t^*)\to\mathbb{R}$ and $\psi:[0,1]\to\mathbb{R}$. Since $f_a> 0$, we have to choose the functions $\phi$ and $\psi$ such that
\begin{equation}\label{condposphipsi}
\frac{\psi'(\xi)\phi'(t)}{K_a(\xi)}> 0,\qquad (\xi,t)\in [0,1]\times [0,t^*).
\end{equation}
In particular, $\phi$ is either increasing or decreasing. Finally, since $f_a>0$, we arrive at
\begin{equation}\label{solveLiouville5}
F_\xi(\xi,t)=\left(\frac{2\psi'(\xi)\phi'(t)}{K_a(\xi)[\phi(t)+\psi(\xi)]^2}\right)^\frac{1}{a+1},\qquad (\xi,t)\in[0.1]\times[0,t^*).
\end{equation}
We will determine the functions $\phi$ and $\psi$ in \eqref{solveLiouville5} to match the boundary conditions of the flow map $F$. First, evaluating \eqref{solveLiouville5} at $t=0$ and using the definition of $F$ in \eqref{flowu}, it follows that
\begin{equation}\label{solvepsi}
\frac{2\psi'(\xi)\phi'_0}{K_a(\xi)[\phi_0+\psi(\xi)]^2}=1,
\end{equation}
where we have denoted $\phi(0)=\phi_0$ and $\phi'(0)=\phi'_0$. Equation \eqref{solvepsi} can be integrated explicitly to
\begin{equation}\label{solvepsi2}
\psi(\xi)=-\frac{1+\phi_0\left(\overline{\kappa}+\frac{1}{2\phi'_0}\int_0^\xi K_a(s)\,ds\right)}{\overline{\kappa}+\frac{1}{2\phi'_0}\int_0^\xi K_a(s)\,ds},\qquad \xi\in[0,1],
\end{equation}
for some constant $\overline{\kappa}\in\mathbb{R}$. Since $K_a(\xi)=(a+1)u_{xx}^0(\xi)$, equation \eqref{solvepsi2} reduces to
\begin{equation}
\psi(\xi)=-\frac{1+\phi_0\left(\kappa+\frac{a+1}{2\phi'_0}u^0_x(\xi)\right)}{\kappa+\frac{a+1}{2\phi'_0}u^0_x(\xi)},\qquad \xi\in[0,1],
\end{equation}
where we have denoted $\kappa=\overline{\kappa}-\frac{a+1}{2\phi_0'}u_x^0(0)$, and $F_\xi$ becomes
\begin{equation}\label{solveLiouville6}
F_\xi(\xi,t)=\left(\frac{\phi'(t)}{\phi_0'}\right)^\frac{1}{a+1}\left[1-(\phi(t)-\phi_0)\left(\kappa+\frac{a+1}{2\phi'_0}u^0_x(\xi)\right)\right]^{-\frac{2}{a+1}},
\end{equation}
for $(\xi,t)\in [0,1]\times [0,t^*).$
Rescaling
\begin{equation}
\phi(t)\mapsto\frac{1}{\phi_0'}(\phi(t)-\phi_0),
\end{equation} 
we can assume that $\phi_0=0$ as well as $\phi_0'=1$ and hence, equation \eqref{solveLiouville6} simplifies to 
\begin{equation}\label{Fxiphi}
F_\xi(\xi,t)=\left(\phi'(t)\right)^\frac{1}{a+1}\left[1-\phi(t)\left(\kappa+\frac{a+1}{2}u^0_x(\xi)\right)\right]^{-\frac{2}{a+1}},\qquad  (\xi,t)\in [0,1]\times [0,t^*),
\end{equation}
for some $\kappa\in\mathbb{R}$. We can give an expression of the constant $\kappa$ in terms of the function $\phi$ as follows.  Taking a $t$-derivative in equation \eqref{Fxiphi}, it follows from the initial condition $F_{t\xi}(\xi,0)=u_{x}^0(\xi)$, $\xi\in[0,1]$, that
\begin{equation}
\frac{1}{a+1}\phi_0''+\frac{2}{a+1}\left(\kappa+\frac{a+1}{2}u_x^0(\xi)\right)=u_x^0(\xi),\quad \xi\in[0,1],
\end{equation}
where we have used that $\phi(0)=0$, as well as $\phi'(0)=1$ and where we have abbreviated $\phi''_0:=\phi''(0)$. Hence, $\kappa$ is given as
\begin{equation}\label{kappaphipp}
\kappa=-\frac{\phi_0''}{2}.
\end{equation}
Sine $\kappa$ as given by equation \eqref{kappaphipp} is a second-derivative property of the function $\phi$, the flow map $F$ should not depend upon $\kappa$. Indeed, we can define another function $\eta$ through the M\"obius transform
\begin{equation}
\eta(t)=\frac{\phi(t)}{1-\kappa\phi(t)},\qquad \phi(t)=\frac{\eta(t)}{1+\kappa\eta(t)},\qquad t\in (0,t^*),
\end{equation}
which also satisfies $\eta(0)=0$ and $\eta'(0)=1$ such that the derivative of the flow map becomes
\begin{equation}\label{Fxi}
F_\xi(\xi,t)=\left(\eta'(t)\right)^\frac{1}{a+1}\left[1-\eta(t)\frac{a+1}{2}u^0_x(\xi)\right]^{-\frac{2}{a+1}},\qquad  (\xi,t)\in [0,1]\times [0,t^*),
\end{equation}
i.e., we have eliminated the dependence upon $\kappa$ while keeping the first-derivative properties of the unknown function $\eta$.\\
Integrating equation \eqref{Fxi} with respect to $\xi$ gives
\begin{equation}\label{flowmapDirichlet}
F(\xi,t)=\mu(t)+\left(\eta'(t)\right)^\frac{1}{a+1}\int_0^\xi\left[1-\eta(t)\frac{a+1}{2}u^0_x(s)\right]^{-\frac{2}{a+1}}\, ds,\qquad  (\xi,t)\in [0,1]\times [0,t^*),
\end{equation}
for some function $\mu:[0,t^*)\to\mathbb{R}$.\\
Assuming Dirichlet boundary conditions \eqref{DirichletF}, it immediately follows that $\mu\equiv 0$ and hence
\begin{equation}\label{FDirichlet}
F(\xi,t)=\left(\eta'(t)\right)^\frac{1}{a+1}\int_0^\xi\left[1-\eta(t)\frac{a+1}{2}u^0_x(s)\right]^{-\frac{2}{a+1}}\,ds,\qquad  (\xi,t)\in [0,1]\times [0,t^*).
\end{equation}
To match the boundary condition $F(1,t)=1,\quad t\in[0,t^*)$, the function $\eta$ has to satisfy the implicit ordinary differential equation
\begin{equation}\label{eqeta}
\left(\eta'(t)\right)^\frac{1}{a+1}\int_0^1\left[1-\eta(t)\frac{a+1}{2}u^0_x(s)\right]^{-\frac{2}{a+1}}\,ds=1,\qquad  t\in [0,t^*),
\end{equation}
which, in general, cannot be solved explicitly.\\
On the other hand, assuming periodic mean-free boundary conditions \eqref{periodicBC}, which imply that $F(\xi,t)=F(\xi+1,t)-1,\quad t\in[0,t^*)$, we obtain that $\eta$ has to satisfy equation \eqref{eqeta} as well. By the mean-free constraint \eqref{flowperiodic}, the function $\mu$ is then given as the solution to the differential equation
\begin{footnotesize}
\begin{equation}
\begin{split}
\mu'(t)=-\int_{F^{-1}(0,t)}^{F^{-1}(0,t)+1}\left(\eta'(t)\right)^\frac{1}{a+1}&\left[1-\eta(t)\frac{a+1}{2}u^0_x(\xi)\right]^{-\frac{2}{a+1}}\times\\
&\times\frac{d}{dt}\left(\left(\eta'(t)\right)^\frac{1}{a+1}\int_0^\xi\left[1-\eta(t)\frac{a+1}{2}u^0_x(s)\right]^{-\frac{2}{a+1}}\, ds\right)\, d\xi,
\end{split}
\end{equation}
\end{footnotesize}

\noindent
with initial condition $\mu(0)=0$, since $F(0,\xi)=\xi$.\\
Note that formula  \eqref{flowmapDirichlet} only holds as long as $\eta'(t)\geq 0$ and $1-\eta(t)\frac{a+1}{2}u^0_x(\xi)>0$ for all $\xi\in(0,1)$. 

\begin{example}[$a=-3$, Burger's equation]
Assume Dirichlet boundary conditions and let $a=-3$. For this parameter value of $a$, the GPJ equation reduces to Burger's equation, for which the flow map can be computed easily as
\begin{equation}
F_{Burgers}(\xi,t)=u^0(\xi)t+\xi,\qquad (\xi,t)\in [0,1]\times [0,t^*).
\end{equation}
Now, consider the flow map \eqref{flowmapDirichlet} with $a=-3$:
\begin{equation}\label{FBurgers}
\begin{split}
F(\xi,t)&=\sqrt{\frac{1}{\eta'(t)}}\int_0^\xi 1+\eta(t)u^0_x(s)\, ds\\
&=\sqrt{\frac{1}{\eta'(t)}}\Big(\xi +\eta(t)u^0(\xi) \Big),
\end{split}
\end{equation}
where we have used that $u^0(0)=0$ in the second step. Note that formula \eqref{FBurgers} holds only for $t\in[0,t^*)$, where $1+\eta(t)u^0_x(s)> 0$. From the second boundary condition, $F(1,t)=1, t\in[0,t^*)$ and $u^0(1)=0$, it follows that $\eta$ satisfies the ordinary differential equation
\begin{equation}\label{eqphiBurgers}
\sqrt{\frac{1}{\eta'(t)}}=1,\qquad t\in[0,t^*],
\end{equation}
which, remembering that $\eta(0)=0$ as well as $\eta'(0)=1$, implies that
\begin{equation}
\eta(t)=t.
\end{equation}
The flow map then takes the form
\begin{equation}
F(\xi,t)=u^0(\xi)t+\xi,\quad (\xi,t)\in [0,1]\times [0,t^*),
\end{equation}
which is exactly the flow map of Burger's equation. In particular, we recover the explicit formula for the blow-up time,
\begin{equation}
t^*=-\frac{1}{\min_{\xi\in[0,1]}u_x^0(\xi)}.
\end{equation}
\end{example}

\begin{example}[$a=-2$, Hunter--Saxton equation]
Assume Dirichlet boundary conditions and let $a=-2$. For this parameter value of $a$, the GPJ equation reduces to the Hunter--Saxton equation.
Consider now the flow map \eqref{flowmapDirichlet} with $a=-2$:
\begin{equation}\label{flowHS}
\begin{split}
F(\xi,t)&=\frac{1}{\eta'(t)}\int_0^\xi\left[1+\eta(t)\frac{1}{2}u^0_x(\xi)\right]^2\, ds\\
&=\frac{1}{\eta'(t)}\left[\xi+ u^0(\xi)\eta(t)+\frac{\eta^2(t)}{4}\int_0^\xi(u_x^0(s))^2\,ds\right], 
\end{split}
\end{equation}
for $(\xi,t)\in [0,1]\times [0,t^*)$. We remark again that the representation formula \eqref{flowHS} only holds as long as
\begin{equation}\label{condHS}
1+\eta(t)\frac{1}{2}u^0_x(\xi)> 0.
\end{equation} 
Evaluating equation \eqref{flowHS} at $\xi=1$ and using the boundary condition $F(1,t)=1, t\in[0,t^*)$, we obtain the following ordinary differential equation for $\eta$:
\begin{equation}\label{etaeqHS}
\eta'(t)=1+\frac{1}{4}\|u_x^0\|_{L^2(0,1)}^2\eta^2(t),\qquad t\in[0,t^*].
\end{equation}
Equation \eqref{etaeqHS} can be integrated to 
\begin{equation}
\eta(t)=\frac{2}{\|u_x^0\|_{L^2(0,1)}}\tan\left(\frac{t}{2}\|u_x^0\|_{L^2(0,1)}\right),\qquad  t\in[0,t^*],
\end{equation}
where we have used that $\eta(0)=0$ and $\eta'(0)=1$.
Finally, the flow map takes the form
\begin{equation}
\begin{split}
F(\xi,t)&=\cos^{2}\left(\frac{t}{2}\|u_x^0\|_{L^2(0,1)}\right)\left[\xi+\frac{2}{\|u_x^0\|_{L^2(0,1)}}\tan\left(\frac{t}{2}\|u_x^0\|_{L^2(0,1)}\right)u^0(\xi)\right.\\
&\qquad\left.+\frac{1}{\|u_x^0\|_{L^2(0,1)}^2}\tan^2\left(\frac{t}{2}\|u_x^0\|_{L^2(0,1)}\right)\int_0^\xi(u_x^0(s))^2\,ds\right],\quad (\xi,t)\in[0,1]\times [0,t^*).
\end{split}
\end{equation}
Since $\eta$ blows up in finite time, we find that condition \eqref{condHS} will be violated at the blow-up time
\begin{equation}
t^*=-\frac{2}{\|u_x^0\|_{L^2(0,1)}}\arctan\left(\frac{\|u_x^0\|_{L^2(0,1)}}{\min_{\xi\in[0,1]}u^0_x(\xi)}\right).
\end{equation}
\end{example}

\subsection{Constant Curvature Solutions}\label{constantcurvaturechapter}

Consider the GPJ equation with Dirichlet boundary conditions and initial condition $u^0(x)=\gamma x(x-1)$, cf. Figure \ref{Parabola}.

\begin{figure}
	\includegraphics[scale=0.8]{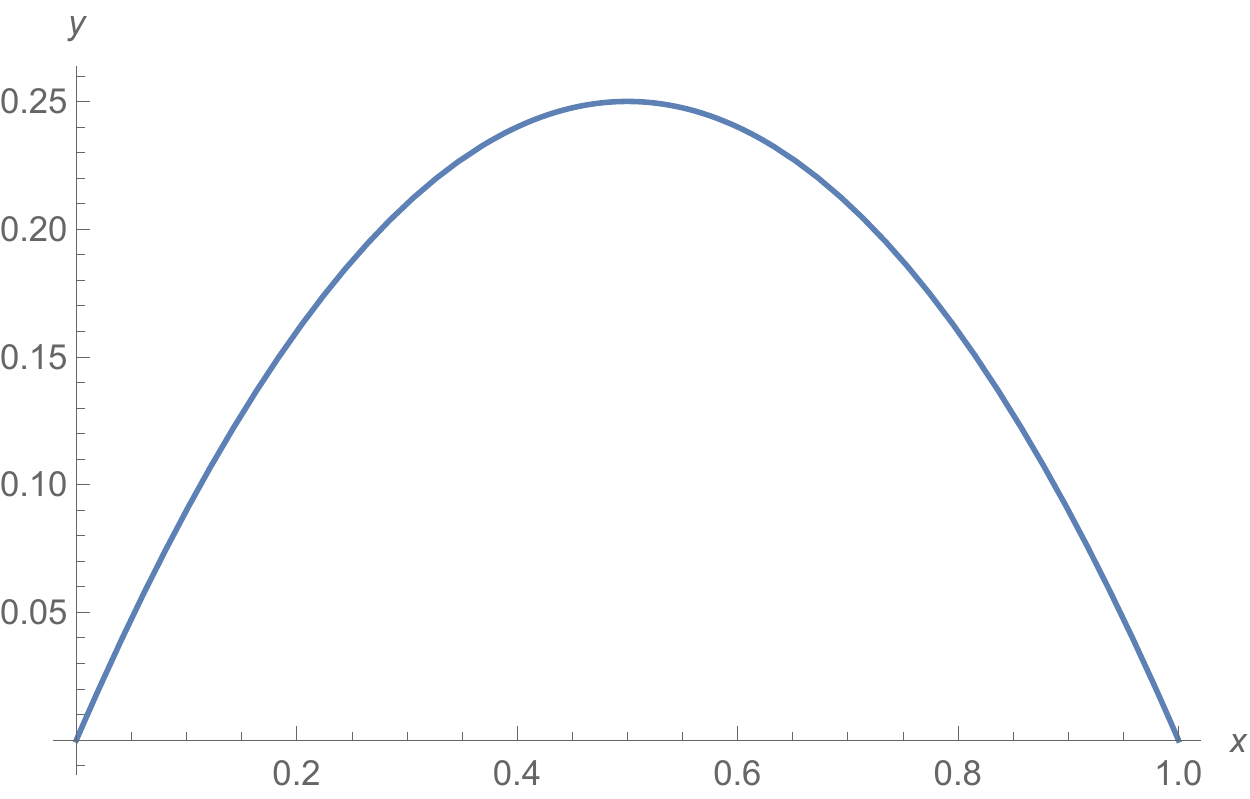}	
	\caption{The initial condition $u^0(x)=-x(x-1)$, satisfying Dirichlet boundary conditions.} 
	\label{Parabola}
\end{figure}

 Since $u_x^0(x)=\gamma (2x-1)$, the representation formula \eqref{flowmapDirichlet} gives
\begin{equation}\label{flowconstant}
\begin{split}
F(\xi,t)&=(\eta'(t))^\frac{1}{a+1}\int_0^\xi\Big[1-\gamma\frac{a+1}{2}(2s-1)\eta(t)\Big]^{-\frac{2}{a+1}}\, ds\\
&=(\eta'(t))^\frac{1}{a+1}\frac{a+1}{-2A(t)(a-1)}\left([1+(1-2\xi)A(t)]^{\frac{a-1}{a+1}}-[1+A(t)]^\frac{a-1}{a+1}\right),
\end{split}
\end{equation}
for $(\xi,t)\in[0,1]\times [0,t^*)$ and $a\neq 1$. (The case $a=1$ will be treated below.)
 where we have abbreviated
\begin{equation}
A(t)=\gamma\frac{a+1}{2}\eta(t).
\end{equation}
Using the boundary condition $F(1,t)=1,\quad t\in[0,t^*)$, the formula \eqref{flowconstant} leads to the following ordinary differential equation for the function $\eta$:
\begin{equation}\label{ODEconstantcurvature}
1=(\eta'(t))^\frac{1}{a+1}\frac{1}{\gamma(1-a)\eta(t)}\left([1-\gamma\frac{a+1}{2}\eta(t)]^{\frac{a-1}{a+1}}-[1+\gamma\frac{a+1}{2}\eta(t)]^\frac{a-1}{a+1}\right),
\end{equation}
for $t\in (0,t^*)$, which, for general $a$, cannot be solved in terms of elementary functions.\\

\begin{example}[Constant Curvature for $a=0$]
Consider the constant curvature flow \eqref{flowconstant} for $a=0$. Equation \eqref{ODEconstantcurvature} becomes
\begin{equation}
1=\frac{\eta'(t)}{1-\frac{\gamma^2}{4}\eta^2(t)},\quad t\in(0,t^*),
\end{equation}
which, using $\eta_0=0$ and $\eta'_0=1$, can be solved to $\eta(t)=\frac{2}{\gamma}\tanh\left(\frac{\gamma}{2} t\right),\quad t\in(0,t^*)$. Hence, the flow map takes the form
\begin{equation}\label{Flowa1}
F(\xi,t)=\frac{\xi\left(\coth\left(\frac{\gamma t}{2}\right)-1\right)}{1-2\xi+\coth\left(\frac{\gamma t}{2}\right)},\quad (\xi,t)\in [0,1]\times (0,\infty).
\end{equation}
In particular, the flow map exists for all times. The time evolution of the $\xi$ of \eqref{Flowa1} now describes a surface of constant curvature, cf. Figure \ref{3Dconstantcurvature}.
\begin{figure}
	\includegraphics[scale=0.5]{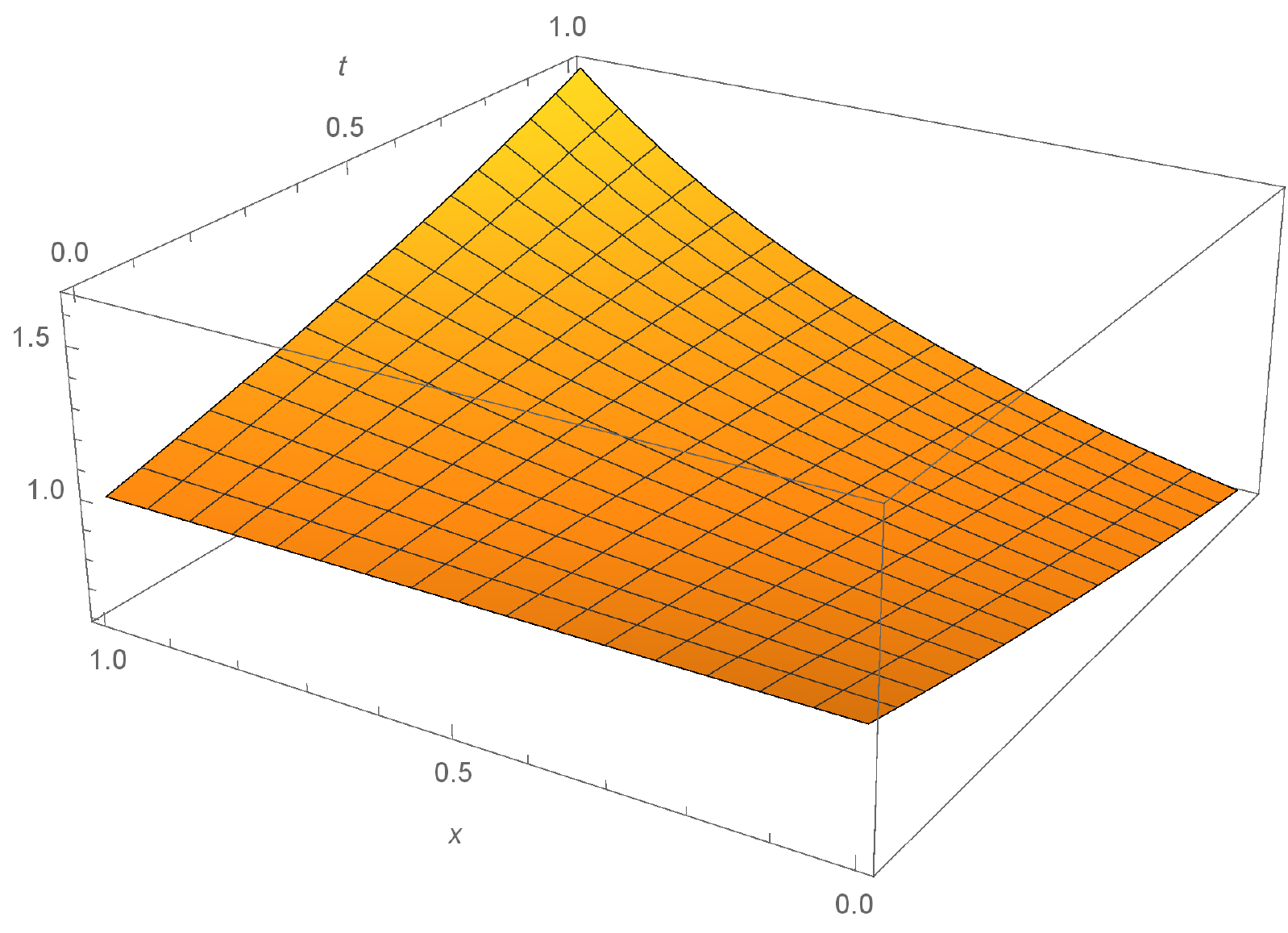}	
	\caption{The $\xi$-derivative of \eqref{Flowa1} for the parameter value $\gamma=2$ and for $(\xi,t)\in[0,1]\times [0,1]$.} 
	\label{3Dconstantcurvature}
\end{figure}

\end{example}

\begin{example}[Constant  Curvature for $a=1$]\label{examplea=1}
	For $a=1$, equation \eqref{GPJa} becomes the Proudman--Johnson equation. We revisit the example $u^0(x)=\gamma x(x-1)$, which was already treated in \cite{childress_ierley_spiegel_young_1989}, from a flow map point of view. The ordinary differential equation \eqref{eqeta} takes the form
	\begin{equation}\label{eqlog}
	\begin{split}
	1&=\sqrt{\eta'(t)}\int_0^1\frac{1}{1-\eta(t)\gamma(2s-1)}\, ds\\
	&=-\sqrt{\eta'(t)}\frac{1}{2\gamma\eta(t)}\log\left|\frac{1-\gamma\eta(t)}{1+\gamma\eta(t)}\right|\, ds,
	\end{split}
	\end{equation}
for $t\in[0,t^*)$. By comparing the signs of $\gamma$ and by noting that $\eta(t)$ will be positive for, at least, small $t$, equation \eqref{eqlog} can be integrated implicitly to
\begin{equation}\label{defPsi}
\begin{split}
t&=\frac{1}{4\gamma^2}\int_0^\eta\frac{1}{y^2}\log^2\left|\frac{1-\gamma y}{1+\gamma y}\right|\, dy\\
&=:\Psi(\eta).
\end{split}
\end{equation}
Since the function $y\mapsto \frac{1}{y}\log\left|\frac{1-\gamma y}{1+\gamma y}\right|$ is square integrable over $\mathbb{R}$, as can be seen by integration by parts, the function $\eta\mapsto \Psi(\eta)$ does not blow up and hence, the map $t\mapsto\eta(t)$ is unbounded, in fact, even blows up in finite time cf. Figure \ref{FigPsieta}. This implies, that the condition $1+\eta(t)\frac{1}{2}u^0_x(\xi)> 0$ will be violated at a finite time $t^*<\infty$ and hence, no global solution exists. 
\begin{figure}
	\includegraphics[scale=0.8]{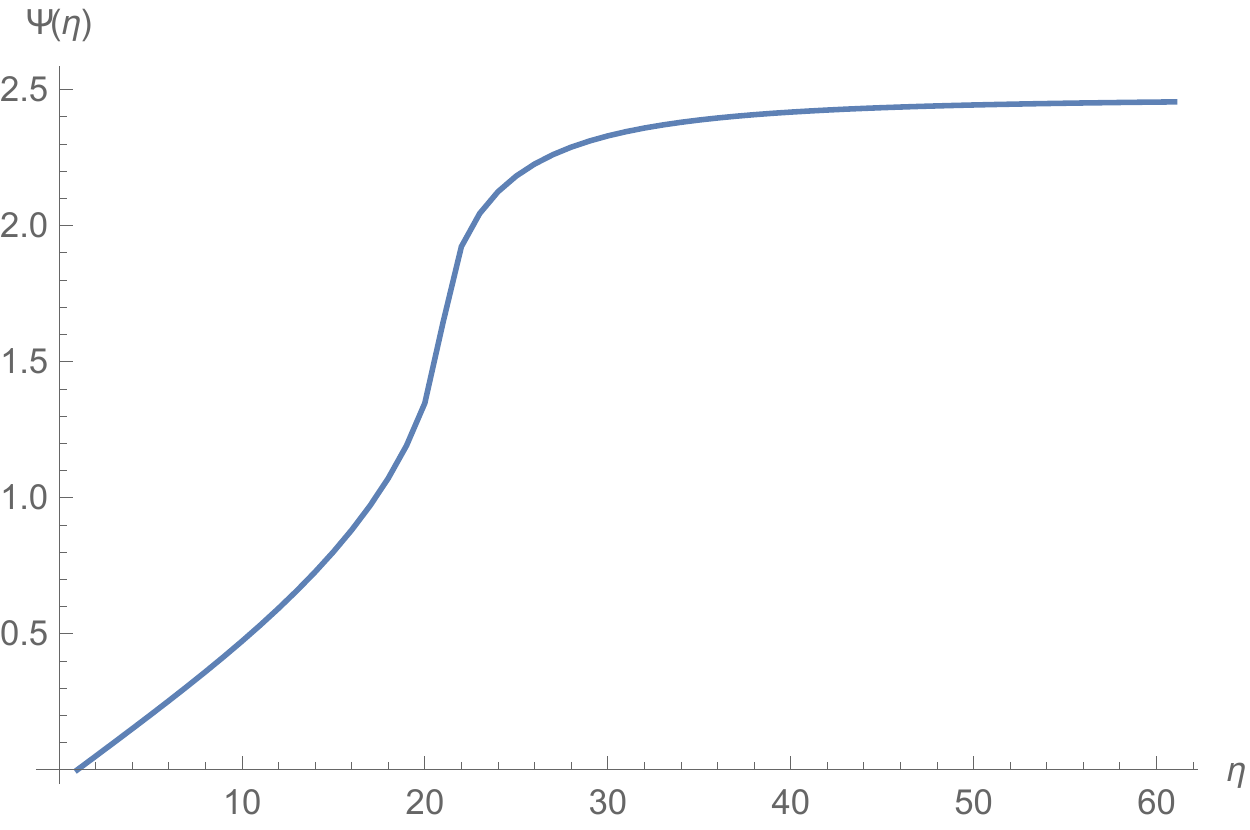}	
	\caption{The function $\eta\mapsto\Psi(\eta)$ as defined in \eqref{defPsi} for the parameter value $\gamma=2$.} 
	\label{FigPsieta}
\end{figure}
\end{example}

\section{Criteria for formation of a singularity and global existence}\label{Criteria}

In this section, we present some singularity criteria and global well-posedness criteria for different values of $a$.\\
We note that the representation formula \eqref{flowmapDirichlet} only holds true as long as $1-\eta(t)\frac{a+1}{2}u_x^0(\xi)>0$. If, for some $(\xi^*,t^*)\in[0,1]\times(0,\infty)$, we have that $0=1-\eta(t^*)\frac{a+1}{2}u_x^0(\xi^*)=\Big(\eta'(t^*)\Big)^{-\frac{1}{a+1}}F_\xi(\xi^*,t^*)$, then the flow map $F$ ceases to be a diffeomorphism of the interval $[0,1]$ and hence, the velocity field $u$ can no longer belong to the class $u\in C\Big(0,t^*; C^{1,Lip}(0,1)\Big)$. Therefore, global existence of solutions $u\in C\Big(0,t^*; C^{1,Lip}(0,1)\Big)$ is equivalent to preservation of the inequality $1-\eta(t)\frac{a+1}{2}u_x^0(\xi)>0$ for all $(\xi,t)\in[0,1]\times [0,\infty)$.\\
Throughout, we write
\begin{equation}
-u_{min}\leq u_x^0(\xi)\leq u_{max},\quad \xi\in[0,1],
\end{equation}
for the negative minimal value and the positive maximal value of $u_x^0$. In the proof of our main theorem, we will need the following inequality.

\begin{lemma}[Reverse quadratic Bernoulli inequality]
	Let $x>-1$ and let $0<\alpha<1$. Then
	 \begin{equation}\label{reverseBernoulli}
	 (1+x)^\alpha\geq \frac{\alpha(\alpha-1)}{2}x^2+\alpha x+\frac{\alpha(3-\alpha)}{2}.
	 \end{equation}
\end{lemma}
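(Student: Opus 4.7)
The plan is to reduce the inequality to showing non-negativity of the difference function
\[
f(x) := (1+x)^{\alpha} - \frac{\alpha(\alpha-1)}{2}x^{2} - \alpha x - \frac{\alpha(3-\alpha)}{2},\qquad x > -1,
\]
and to establish $f \geq 0$ through a sign analysis of $f''$, splitting the domain at the inflection point $x=0$. The argument proceeds in four bookkeeping steps and relies on no estimates beyond elementary calculus.

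First, I would evaluate $f$ at the two distinguished points. A direct computation gives $f(-1) = -\tfrac{\alpha(\alpha-1)}{2} + \alpha - \tfrac{\alpha(3-\alpha)}{2} = 0$, so the inequality is sharp at $x=-1$, and $f(0) = 1 - \tfrac{\alpha(3-\alpha)}{2} = \tfrac{(\alpha-1)(\alpha-2)}{2}$. Since $0 < \alpha < 1$ makes both factors negative, $f(0) > 0$.

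Second, I would differentiate twice to obtain
\[
f'(x) = \alpha(1+x)^{\alpha-1} - \alpha(\alpha-1)x - \alpha,\qquad f''(x) = \alpha(\alpha-1)\bigl[(1+x)^{\alpha-2} - 1\bigr].
\]
The prefactor $\alpha(\alpha-1)$ is negative, while the bracket $(1+x)^{\alpha-2} - 1$ is positive exactly when $1+x < 1$ (because $\alpha - 2 < 0$). Hence $f''(x) < 0$ on $(-1,0)$ and $f''(x) > 0$ on $(0,\infty)$, i.e., $f$ is concave on the left of zero and convex on the right, with inflection at $x=0$. One also checks $f'(0) = \alpha - \alpha = 0$.

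Third, for $x \geq 0$, convexity of $f$ on $[0,\infty)$ combined with $f'(0) = 0$ forces $f'$ to be non-negative, so $f$ is non-decreasing on $[0,\infty)$ and $f(x) \geq f(0) > 0$. For $-1 \leq x \leq 0$, $f$ is concave with $f(-1) = 0$ and $f(0) > 0$; consequently $f$ lies above the chord joining $(-1,0)$ and $(0, f(0))$, which is itself non-negative on $[-1,0]$, so $f \geq 0$ on that interval as well. Combining the two subintervals yields $f(x) \geq 0$ for all $x > -1$, which is exactly \eqref{reverseBernoulli}. The only subtle point is noticing that the constant $\alpha(3-\alpha)/2$ is precisely the one making $f(-1)$ vanish; once that normalisation is in hand the sign analysis is automatic, so I do not anticipate any real obstacle.
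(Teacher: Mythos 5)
Your proof is correct, and it takes a genuinely different route from the paper's. The paper applies the classical Bernoulli inequality $(1+s)^{\alpha-1}\geq 1+(\alpha-1)s$ (valid because $\alpha-1<0$) and integrates it from $-1$ to $x$, using that $(1+s)^{\alpha}\to 0$ as $s\to -1^{+}$; multiplying by $\alpha>0$ then produces the quadratic lower bound in two lines, and in particular explains where the constant $\tfrac{\alpha(3-\alpha)}{2}$ comes from: it is the value of the antiderivative of the Bernoulli bound at the anchor point $s=-1$. You instead verify the inequality directly by a sign analysis of the difference $f$: the checks $f(-1)=0$, $f(0)=\tfrac{(\alpha-1)(\alpha-2)}{2}>0$, $f'(0)=0$ and the factorization $f''(x)=\alpha(\alpha-1)\bigl[(1+x)^{\alpha-2}-1\bigr]$ are all correct, and the concavity-above-the-chord argument on $[-1,0]$ together with monotonicity on $[0,\infty)$ is sound (the only point worth making explicit is that $f$ extends continuously to $x=-1$ because $\alpha>0$, which you implicitly use). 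Your version is somewhat longer but entirely self-contained and makes the sharpness at $x=-1$ visible; the paper's version is shorter, exposes the mechanism behind the specific quadratic, and iterates naturally (repeated integration yields higher-order reverse Bernoulli bounds). Either proof is acceptable.
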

\begin{proof}
Since $\alpha-1<0$, we can apply Bernoulli's inequality to obtain
\begin{equation}\label{Bernoulli1}
(1+x)^{\alpha-1}\geq 1+(\alpha-1)x,\quad x>-1.
\end{equation}
Integrating the inequality \eqref{Bernoulli1} from $-1$ to $x$ gives
\begin{equation}
\frac{1}{\alpha}(1+x)^\alpha\geq x+1+\frac{\alpha-1}{2}x^2-\frac{\alpha-1}{2},\quad x>-1,
\end{equation}
which, since $\alpha>0$, gives \eqref{reverseBernoulli}.
\end{proof}

\begin{remark}
	Note that it is only possible to obtain a quadratic inequality of the form \eqref{reverseBernoulli} for $0<\alpha<1$, as for this parameter regime, $\alpha-1<0$ holds, allowing to apply the usual Bernoulli inequality to $(1+x)^{\alpha-1}$, while at the same time $\lim_{x\to-1}(1+x)^\alpha=0$.
\end{remark}

The following theorem gives conditions under which the solution exists for all times or becomes singular in finite time, depending on the parameter $a$. 

\begin{theorem}\label{mainthm}
Let $u\in C\Big(0,t^*; C^{1,Lip}(0,1)\Big)$ be a solution to equation \eqref{GPJa}, either with Dirichlet boundary conditions \eqref{DirichletBC} or with periodic boundary conditions and zero mean \eqref{periodicBC}, and let  $t^*\in (0,\infty]$ be its maximal existence time.
\begin{itemize}
	
	\item For $a<-3$, the solution exists only locally in time, i.e., $t^*<\infty$, provided that
	\begin{equation}\label{Assa-3}
	\|u_x^0\|_{L^2(0,1)}<\sqrt{\frac{3a+5}{a+3}}u_{min},
	\end{equation}
	or provided that
	\begin{equation}\label{Ass-3alternative}
	u_{xx}^0\in L^{\frac{2}{2+(a+1)q}}(0,1),
	\end{equation}
	for some $0<q<-\frac{2}{a+1}$.\\
	
	\item For $-3\leq a<-1$, the solution exists only locally in time, i.e., $t^*<\infty$ and the maximal existence time can be estimated as
	\begin{equation}
	0<t^*<-\frac{2}{(a+1)u_{min}}.
	\end{equation}
	
	\item For $a=-1$, the solution exists globally in time, i.e., $t^*=\infty$.\\
	
	\item For $-1<a<0$, the solution exists globally in time, i.e., $t^*=\infty$, provided that
	\begin{equation}\label{Assuxx}
	u_{xx}^0\in L^{-\frac{1}{a}}(0,1).
	\end{equation}\\

	\item For $-1<a$, the solution exists globally in time, i.e., $t^*=\infty$, provided that
	\begin{equation}\label{Assa-10}
	u_{max}< \frac{1}{\sqrt{1+a}}\|u_x^0\|_{L^2(0,1)},
	\end{equation}
	
	\item For $1\leq a$, the solution exists only locally in time, i.e., $t^*<\infty$, provided that the function
	\begin{equation}\label{defPsiThm}
	\psi(\eta):=\int_0^1\frac{ds}{1-\frac{a+1}{2}\eta u_x^0(s)},
	\end{equation}
	is square-integrable over the interval $[0,\frac{2}{(a+1)u_{max}}]$.
\end{itemize}
\end{theorem}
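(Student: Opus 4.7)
The plan hinges on the ODE obtained in \eqref{eqeta}. Setting
\[
I(\eta) := \int_0^1 \Bigl[1-\tfrac{a+1}{2}\eta\,u_x^0(\xi)\Bigr]^{-2/(a+1)}\,d\xi,
\]
the boundary condition yields $\eta'(t)=I(\eta(t))^{-(a+1)}$ and, equivalently,
\[
t^\ast = \int_0^{\eta^\ast} I(\eta)^{a+1}\,d\eta,
\]
where $\eta^\ast = 2/((a+1)u_{\max})$ when $a>-1$ and $\eta^\ast = -2/((a+1)u_{\min})$ when $a<-1$. Since global existence is equivalent to this integral being infinite and singularity formation to its finiteness, the entire theorem reduces to appropriate one-sided estimates on $I(\eta)$ in each parameter regime.

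Three cases fall out immediately. For $a=-1$ the explicit formula \eqref{flowa=0} shows $F_\xi>0$ for all times. For $-3\leq a<-1$, the exponent $-2/(a+1)\geq 1$ makes the integrand of $I$ convex in $\delta := 1-\tfrac{a+1}{2}\eta u_x^0$, so Jensen's inequality gives $I(\eta)\geq 1$; together with $-(a+1)>0$ this forces $\eta'(t)\geq 1$ and hence $t^\ast\leq \eta^\ast$. For $a\geq 1$, the concave Jensen inequality applied to $y\mapsto y^{2/(a+1)}$ with $y=1/\delta$ produces $I(\eta)\leq \psi(\eta)^{2/(a+1)}$, so $I^{a+1}\leq \psi^2$ and the square-integrability hypothesis on $\psi$ directly bounds $t^\ast$.

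For the remaining cases with extra integrability hypotheses, I would apply the preceding reverse quadratic Bernoulli lemma. For $a<-3$, taking $\alpha=-2/(a+1)\in(0,1)$ and $x=-\tfrac{a+1}{2}\eta u_x^0$, then expanding, integrating in $\xi$, and using $\int u_x^0\,d\xi=0$, yields
\[
I(\eta)\geq \tfrac{a+3}{4}\eta^2\|u_x^0\|_{L^2}^2 - \tfrac{3a+5}{(a+1)^2},
\]
a downward parabola in $\eta$ whose positivity on all of $[0,\eta^\ast]$ is precisely the content of \eqref{Assa-3}; under this positive lower bound for $I$, the integral $\int_0^{\eta^\ast}I^{-|a+1|}\,d\eta$ is finite, so $t^\ast<\infty$. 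The alternative assumption \eqref{Ass-3alternative} is handled by the same Bernoulli expansion combined with the H\"older estimate $|u_x^0(\xi)-u_x^0(\xi_0)|\leq \|u_{xx}^0\|_{L^{2/(2+(a+1)q)}}|\xi-\xi_0|^{-(a+1)q/2}$. For $-1<a<0$ under \eqref{Assuxx}, H\"older's inequality gives $u_{\max}-u_x^0(\xi)\leq \|u_{xx}^0\|_{L^{-1/a}}|\xi-\xi_0|^{1+a}$ near the maximizer $\xi_0$; substituting into $I(\eta)$ and computing the contribution from a neighbourhood of $\xi_0$ by the change of variables $\tau=(\mathrm{const})|\xi-\xi_0|^{1+a}/\lambda$, with $\lambda=1-\tfrac{a+1}{2}\eta u_{\max}$, produces $I(\eta)\gtrsim(\eta^\ast-\eta)^{-1/(a+1)}$, so $\int_0^{\eta^\ast}I^{a+1}\,d\eta\gtrsim\int(\eta^\ast-\eta)^{-1}\,d\eta=+\infty$ and the solution exists globally.

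The main obstacle is the global existence claim for $-1<a$ under \eqref{Assa-10}, which must cover arbitrarily large $a$ including $a>1$. For $-1<a\leq 1$, convex Jensen applied to $y\mapsto y^{2/(a+1)}$ (with exponent $\geq 1$) reverses the earlier inequality to $I(\eta)\geq\psi(\eta)^{2/(a+1)}$, hence $t^\ast\geq\int_0^{\eta^\ast}\psi^2\,d\eta$, and it suffices to show this integral diverges. My strategy is to derive the identity $\psi(\eta)=1+\tfrac{(a+1)^2}{4}\eta^2\int_0^1 (u_x^0)^2/\delta\,d\xi$ (which follows from the mean-zero condition $\int u_x^0\,d\xi=0$), bound the right-hand integral from below via Cauchy--Schwarz so as to extract $\|u_x^0\|_{L^2}^2$, and use \eqref{Assa-10} to match the asymptotics as $\eta\to\eta^\ast$ with the borderline non-integrable blow-up rate $\psi^2\gtrsim(\eta^\ast-\eta)^{-1}$. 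For $a>1$, where the Jensen direction reverses, an analogous direct lower bound on $I(\eta)$ must instead be obtained by localizing around the maximizer of $u_x^0$ and using \eqref{Assa-10} to control the effective width of the near-maximum set through the bound $\|u_x^0\|_{L^2}^2\leq u_{\max}^2|\{u_x^0>0\}|+u_{\min}^2|\{u_x^0<0\}|$. Verifying that \eqref{Assa-10} suffices for divergence of $\int I^{a+1}\,d\eta$ uniformly across the whole range $-1<a$ is where the delicate technical work concentrates.
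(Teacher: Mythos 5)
Your reduction of the whole theorem to the convergence or divergence of $t^*=\int_0^{\eta^*}I(\eta)^{a+1}\,d\eta$ is correct and is in fact a cleaner packaging of what the paper does by contradiction and comparison ODEs. The cases $a=-1$, $-3\leq a<-1$ (Jensen with $x\mapsto x^{\alpha}$, $\alpha\geq1$), $1\leq a$ blow-up (concave Jensen giving $I\leq\psi^{2/(a+1)}$), and $a<-3$ under \eqref{Assa-3} (reverse quadratic Bernoulli; your constants $\tfrac{a+3}{4}$ and $-\tfrac{3a+5}{(a+1)^2}$ check out and positivity of the parabola on $[0,\eta^*]$ is indeed equivalent to \eqref{Assa-3}) coincide with the paper's arguments. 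For \eqref{Ass-3alternative} and \eqref{Assuxx} you take a genuinely different route: a Morrey-type modulus of continuity of $u_x^0$ at its extremizer, fed into the local asymptotics of $I(\eta)$ as $\eta\to\eta^*$, whereas the paper applies H\"older's inequality to $\int u_{xx}^0\,\delta^{q}$ (resp. $\int u_{xx}^0\,\delta^{-2}$) and derives a differential inequality for $\eta$ with an explicit logarithmic antiderivative. Your scaling computation for $-1<a<0$, namely $I(\eta)\gtrsim(\eta^*-\eta)^{-1/(a+1)}$ so that $\int I^{a+1}\gtrsim\int(\eta^*-\eta)^{-1}=\infty$, is sound in outline, but both of these cases are left as sketches rather than proofs.

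The genuine gap is the bullet for $-1<a$ under \eqref{Assa-10}, which you flag but do not prove. Concretely, the route you propose for $-1<a\leq1$ — establish $\int_0^{\eta^*}\psi^2\,d\eta=\infty$ from the identity $\psi(\eta)=1+\tfrac{(a+1)^2}{4}\eta^2\int_0^1(u_x^0)^2/\delta\,d\xi$ together with Cauchy--Schwarz — cannot succeed as stated: Cauchy--Schwarz only gives $\int(u_x^0)^2/\delta\geq\|u_x^0\|_{L^2}^4/\int(u_x^0)^2\delta$, which remains bounded as $\eta\to\eta^*$, whereas divergence of $\int\psi^2$ over the finite interval requires the borderline rate $\psi^2\gtrsim(\eta^*-\eta)^{-1}$; that is a statement about the local behaviour of $u_x^0$ near its maximizer, which the global quantities $u_{max}$ and $\|u_x^0\|_{L^2}$ appearing in \eqref{Assa-10} do not control. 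For comparison, the paper proves this bullet by a quadratic Bernoulli inequality after squaring, obtaining $I(\eta)\geq1-\tfrac{a+1}{4}\|u_x^0\|_{L^2}^2\eta^2$ and arguing that $\eta$ saturates at $\eta_B=\sqrt{2|\alpha|}/\|u_x^0\|_{L^2}<\eta^*$; but observe that in your own (correct) reformulation this lower bound vanishes at $\eta_B$ and $\int_0^{\eta_B}(1-c\eta^2)^{a+1}\,d\eta<\infty$ for $a+1>0$, so it does not force $\int_0^{\eta^*}I^{a+1}\,d\eta=\infty$ either. This is exactly the point where a genuinely new estimate on $I(\eta)$ near $\eta^*$ is needed, and the proposal does not supply one for any part of the range $-1<a$.
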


\begin{remark}
		It is not immediately clear that assumptions \eqref{Assa-10} and the square-integrability of expression \eqref{defPsiThm} are mutually exclusive. We refer to Example \ref{example} for the special case of a piece-wise linear initial condition. For initial conditions which oscillate equally around zero, i.e., initial conditions with $u_{max}=u_{min}$, there is a upper bound on $a$ such that \eqref{Assa-10} can be satisfied. 
		Indeed, by Jensen's inequality
		\begin{equation}
		u_{max}<\frac{1}{\sqrt{1+a}}\|u_x^0\|_{L^2(0,1)}<\frac{1}{\sqrt{1+a}}u_{max},
		\end{equation}
		implying that
		\begin{equation}
		-1<a<0.
		\end{equation}
\end{remark}

\begin{proof}
We divide the proof of Theorem \ref{mainthm} in different section, each one employing different techniques to deduce the claims. To facilitate notation, we introduce the parameter
\begin{equation}
\alpha=-\frac{2}{a+1},
\end{equation}
whose values in dependence upon $a$ are shown in Table \ref{table}.\\
\begin{center}
	\begin{table}[h!]
\begin{tabular}{ |l|l| }
	\hline
	\multicolumn{2}{|c|}{Parameter values $a$ and $\alpha$} \\
	\hline
	$ -\infty<a<-3$ & $\,\,\,\,\,0<\alpha<1$ \\
	$-3\leq a<-1$ & $\,\,\,\,\,1\leq\alpha<\infty$ \\
	$-1<a<0$ & $-\infty<\alpha<-2$ \\
	$\,\,\,\,\,0<a<1$ & $-2<\alpha<-1$\\
	$\quad1<a,\infty$ & $-1<\alpha<0$\\
	\hline
\end{tabular}
\caption{}
\label{table}
\end{table}
\end{center}

\textbf{The case $-\infty<a<-3$ or $0<\alpha<1$}\\
Assume, to the contrary, that $1+\eta (t)\frac{u^0_x(\xi)}{\alpha}>0$, while $\eta'(t)>0$, for all $(\xi,t)\in[0,1]\times[0,\infty)$. First, let us assume \eqref{Assa-3}. This assumption allows us to apply the reversed quadratic Bernoulli inequality \eqref{reverseBernoulli} to equation \eqref{eqeta}, which in turn implies that
\begin{equation}\label{inequa-3}
\begin{split}
1&=\left(\eta'(t)\right)^\frac{1}{a+1}\int_0^1\left[1+\eta(t)\frac{u^0_x(s)}{\alpha}\right]^\alpha\,ds\\
&\geq\left(\eta'(t)\right)^\frac{1}{a+1}\int_0^1\frac{(\alpha-1)}{2\alpha}\Big(u^0_x(s)\eta(t)\Big)^2+u^0_x(s)\eta(t)+\frac{\alpha(3-\alpha)}{2}\, ds\\
&=\left(\eta'(t)\right)^\frac{1}{a+1}\left(\frac{\alpha(3-\alpha)}{2}+\frac{(\alpha-1)}{2\alpha}\|u_x^0\|^2_{L^2(0,1)}\eta^2(t)\right),
\end{split}
\end{equation}
for all $t\in(0,\infty)$, where in the last equality, we have used either Dirichlet or periodic boundary conditions. Since $a+1<0$, inequality \eqref{inequa-3} is equivalent to
\begin{equation}
1\leq \eta'(t)\left(\frac{\alpha(3-\alpha)}{2}+\frac{(\alpha-1)}{2\alpha}\|u_x^0\|^2_{L^2(0,1)}\eta^2(t)\right)^{a+1},
\end{equation}
for $t\in (0,\infty)$, as long as the expression in the brackets is non-negative. Therefore, $\eta$ can be bounded from below by a monotonically increasing function which goes to 
\begin{equation}
\eta^*=\alpha\sqrt{\frac{3-\alpha}{1-\alpha}}\frac{1}{\|u_x^0\|_{L^2(0,1)}},
\end{equation}
as $t\to\infty$. Therefore, for any $\varepsilon>0$, there exists a time $t^*(\varepsilon)$, such that, at the minimum of the expression $1+\eta (t)\frac{u^0_x(\xi)}{\alpha}$ in $\xi$, we find that
\begin{equation}
\begin{split}
1-\eta(t^*)\frac{u_{min}}{\alpha}&\leq 1-\frac{u_{min}}{\alpha}\left(\alpha\sqrt{\frac{3-\alpha}{1-\alpha}}\frac{1}{\|u_x^0\|_{L^2(0,1)}}-\varepsilon\right)\\
&< 1-\sqrt{\frac{3-\alpha}{1-\alpha}}\frac{u_{min}}{\|u_x^0\|_{L^2(0,1)}}+\frac{u_{min}}{\alpha}\varepsilon.
\end{split}
\end{equation}
By assumption \eqref{Assa-3} and since $\varepsilon$ was arbitrary, we obtain a contradiction.\\

Now, let us assume \eqref{Ass-3alternative}. For any $0<q<\alpha$ and any two numbers $\xi_1,\xi_2\in[0,1]$, we can estimate
\begin{equation}\label{inequa-3alternative}
\begin{split}
\frac{\alpha}{\eta(q+1)}\left[\left(1+\eta\frac{u_x^0(\xi)}{\alpha}\right)^{q+1}\right]_{\xi_1}^{\xi_2}&=\frac{\alpha}{\eta(q+1)}\int_{\xi_1}^{\xi_2}\frac{d}{ds}\left(1+\eta\frac{u_x^0(s)}{\alpha}\right)^{q+1}\, ds\\
&=\int_{\xi_1}^{\xi_2}u_{xx}^0(s)\left(1+\eta\frac{u_x^0(s)}{\alpha}\right)^{q}\, ds\\
&\leq \|u_{xx}^0\|_{L^\frac{\alpha}{\alpha-q}}\left\|\left(1+\eta\frac{u^0_{x}}{\alpha}\right)^q\right\|_{L^{\frac{\alpha}{q}}(0,1)}\\
&=\|u_{xx}^0\|_{L^{\frac{\alpha}{\alpha-q}}(0,1)}\left(\int_0^1\left[1+\eta\frac{u^0_{x}(s)}{\alpha}\right]^\alpha\, ds\right)^{\frac{q}{\alpha}},
\end{split}
\end{equation}
for $0<\eta<\frac{|\alpha|}{u_{min}}$, where we have applied H\"older's inequality with the pair of conjugated exponents  $\left(\frac{\alpha}{q},\frac{\alpha}{\alpha-q}\right)$, using assumption \eqref{Ass-3alternative}. Since $u_x^0$ is mean free for both Dirichlet and periodic boundary conditions, we can find $\xi_1<\xi_2\in [0,1]$ such that $u_x^0(\xi_1)<0$, while $u_x^0(\xi_2)>0$ (possibly after multiplying the first equality in \eqref{inequa-3alternative} by $-1$). This implies that we can bound
\begin{equation}\label{inequxi1xi2}
\begin{split}
\frac{\alpha}{\eta(q+1)}\left[\left(1+\eta\frac{u_x^0(\xi)}{\alpha}\right)^{q+1}\right]_{\xi_1}^{\xi_2}&=\frac{\alpha}{\eta(q+1)}\left[\left(1+\eta\frac{u_x^0(\xi_2)}{\alpha}\right)^{q+1}-\left(1+\eta\frac{u_x^0(\xi_1)}{\alpha}\right)^{q+1}\right]\\
&\geq \frac{\alpha}{\eta(q+1)}\left[\left(1+\eta\frac{u_x^0(\xi_2)}{\alpha}\right)^{q+1}-1\right],
\end{split}
\end{equation}
since, for this choice of $\xi_1$, we have that $0<\left(1+\eta\frac{u_x^0(\xi_1)}{\alpha}\right)^{q+1}<1$.
 Since also $\frac{q}{\alpha}>0$, we can therefore estimate
 \begin{equation}
 \begin{split}
1&=(\eta'(t))^{\frac{1}{a+1}}\left(\int_0^1\left[1+\eta(t)\frac{u^0_{x}(s)}{\alpha}\right]^\alpha\, ds\right)\\
&\geq(\eta'(t))^{\frac{1}{a+1}}\left( \frac{\alpha}{\|u_{xx}^0\|_{L^{\frac{\alpha}{\alpha-q}}(0,1)}}\frac{1}{\eta(t)(q+1)}\left[\left(1+\eta(t)\frac{u_x^0(\xi)}{\alpha}\right)^{q+1}\right]_{\xi_1}^{\xi_2}\right)^{\frac{\alpha}{q}}\\
&\geq (\eta'(t))^{\frac{1}{a+1}} \left( \frac{\alpha}{\|u_{xx}^0\|_{L^{\frac{\alpha}{\alpha-q}}(0,1)}}\frac{1}{\eta(t)(q+1)}\left[\left(1+\eta(t)\frac{u_x^0(\xi_2)}{\alpha}\right)^{q+1}-1\right]\right)^{\frac{\alpha}{q}},
\end{split}
 \end{equation}
for $t\in(0,t^*)$, where we have used inequality \eqref{inequa-3alternative} in the first step and inequality \eqref{inequxi1xi2} in the last step. Since now $a+1<0$, it follows that $\eta'$ can be bounded from below as

 \begin{equation}\label{help}
 \begin{split}
 1&\leq \left( \frac{\alpha}{\|u_{xx}^0\|_{L^{\frac{\alpha}{\alpha-q}}(0,1)}}\frac{1}{(q+1)}\right)^{\frac{\alpha}{q}(a+1)}\eta'(t)[\eta(t)]^{-(a+1)\frac{\alpha}{q}}\left[\left(1+\eta(t)\frac{u_x^0(\xi_2)}{\alpha}\right)^{q+1}-1\right]^{(a+1)\frac{\alpha}{q}}\\
 &= \left( \frac{\alpha}{\|u_{xx}^0\|_{L^{\frac{\alpha}{\alpha-q}}(0,1)}}\frac{1}{(q+1)}\right)^{-\frac{2}{q}}\eta'(t)[\eta(t)]^{\frac{2}{q}}\left[\left(1+\eta(t)\frac{u_x^0(\xi_2)}{\alpha}\right)^{q+1}-1\right]^{-\frac{2}{q}},
 \end{split}
 \end{equation}

by the definition of $\alpha$. Integrating both sides of the inequality \eqref{help} with respect to $t$, we have that
 \begin{equation}
t\leq C\int_0^\eta y^{\frac{2}{q}}\left[\left(1+y\frac{u_x^0(\xi_2)}{\alpha}\right)^{q+1}-1\right]^{-\frac{2}{q}}\, dy,
 \end{equation}
for some constant $C>0$. 
We will now show that
 \begin{equation}\label{intcond}
\int_0^{\frac{\alpha}{u_{min}}} y^{\frac{2}{q}}\left[\left(1+y\frac{u_x^0(\xi_2)}{\alpha}\right)^{q+1}-1\right]^{-\frac{2}{q}}\, dy<\infty,
\end{equation}
to obtain a contradiction.
First, we note that the integrand in \eqref{intcond} is bounded  for $\eta\in(0,\infty)$ (since $0<q\leq\alpha <1$):
 \begin{equation}
 \begin{split}
 \lim_{y\to 0}y^{\frac{2}{q}}\left[\left(1+y\frac{u_x^0(\xi_2)}{\alpha}\right)^{q+1}-1\right]^{-\frac{2}{q}}&=\lim_{y\to 0}y^{\frac{2}{q}}\left[\sum_{n=1}^\infty{q+1 \choose n}\left(y\frac{u_x^0(\xi_2)}{\alpha}\right)^n\right]^{-\frac{2}{q}}\\
 &=\lim_{y\to 0}\left[\sum_{n=1}^\infty{q+1 \choose n}\left(\frac{u_x^0(\xi_2)}{\alpha}\right)^ny^{n-1}\right]^{-\frac{2}{q}}\\
 &=\lim_{y\to 0} \left(\frac{u_x^0(\xi_2)}{\alpha}\right)^{-\frac{2}{q}}\left[\sum_{n=0}^\infty{q+1 \choose n+1}\left(y\frac{u_x^0(\xi_2)}{\alpha}\right)^n\right]^{-\frac{2}{q}}\\
 &=\left(\frac{u_x^0(\xi_2)}{\alpha}\right)^{-\frac{2}{q}}(q+1)^{-\frac{2}{q}}<\infty,
 \end{split}
 \end{equation}
 where we have expanded the expression in a binomial series, using that $y$ is small enough. Since $u_x^0(\xi_2)>0$, the integrand in \eqref{intcond}  does not have any pole for $y\geq 0$. Since 
\begin{equation}
y^{\frac{2}{q}}\left[\left(1+y\frac{u_x^0(\xi_2)}{\alpha}\right)^{q+1}-1\right]^{-\frac{2}{q}}=\mathcal{O}(y^{-2}), \quad \text{ for } y\to\infty,
\end{equation}
the inequality \eqref{intcond} is indeed satisfied - a contradiction.
 This proves the claim.\\

\textbf{The case $-3\leq a<-1$ or $1\leq\alpha<\infty$}\\
Assume again, to the contrary, that $1+\eta (t)\frac{u^0_x(\xi)}{\alpha}>0$, while $\eta'(t)>0$, for all $(\xi,t)\in[0,1]\times[0,\infty)$. For this range or parameter values, the function $x\mapsto x^\alpha$ is convex, while the function $x\mapsto x^\frac{1}{a+1}$ is monotonically decreasing. \footnote{Note that this is the maximal parameter range with these properties.} We can therefore apply Jensen's inequality to \eqref{eqeta} to obtain
\begin{equation}\label{inequa-3-1}
\begin{split}
1&=\left(\eta'(t)\right)^\frac{1}{a+1}\int_0^1\left[1+\eta(t)\frac{u^0_x(s)}{\alpha}\right]^\alpha\, ds\\
&\geq \left(\eta'(t)\right)^\frac{1}{a+1}\left(\int_0^11+\eta(t)\frac{u^0_x(s)}{\alpha}\, ds\right)^\alpha\\
&= \left(\eta'(t)\right)^\frac{1}{a+1},
\end{split}
\end{equation}
for $t\in[0,\infty)$, where in the last step, we have again used either Dirichlet or periodic boundary conditions. Inverting and integrating inequality \eqref{inequa-3-1} gives
\begin{equation}
t\leq \eta(t),\quad t\in[0,\infty),
 \end{equation}
where we have used the initial data for $\eta$ - again a contraction to $1+\eta (t)\frac{u_{min}}{\alpha}>0$ at $t^*=\frac{\alpha}{u_{min}}$.\\

\textbf{The case $a=-1$ or $\alpha=-\infty$}\\
In this case, we have derived the explicit formula \eqref{flowa=0} flow map, which shows that the solution exists for all times.\\

\textbf{The case $-1<a<0$ or $-\infty<\alpha<-2$}\\
For any two numbers $\xi_1,\xi_2\in [0,1]$, $\xi_1<\xi_2$, we can calculate
\begin{equation}\label{inequ-1a1}
\begin{split}
\frac{-\alpha}{\eta}\left[\left(1+\eta\frac{u_x^0(\xi)}{\alpha}\right)^{-1}\right]^{\xi_2}_{\xi_1}&=\frac{-\alpha}{\eta}\int_{\xi_1}^{\xi_2}\frac{d}{ds}\left(1+\eta\frac{u_x^0(s)}{\alpha}\right)^{-1}\, ds\\
&=\int_{\xi_1}^{\xi_2}u_{xx}^0(s)\left(1+\eta\frac{u_x^0(s)}{\alpha}\right)^{-2}\, ds\\
&\leq \|u_{xx}^0\|_{L^{\frac{\alpha}{\alpha+2}}(0,1)}\left\|\left(1+\eta\frac{u^0_{x}}{\alpha}\right)^{-2}\right\|_{L^{-\frac{\alpha}{2}}(0,1)}\\
&=\|u_{xx}^0\|_{L^{\frac{\alpha}{\alpha+2}}(0,1)}\left(\int_0^1\left[1+\eta\frac{u^0_{x}(s)}{\alpha}\right]^\alpha\, ds\right)^{-\frac{2}{\alpha}},
\end{split}
\end{equation}
for $0<\eta<\frac{|\alpha|}{u_{max}}$, where we have applied H\"older's inequality with the pair of conjugated exponents  $\left(-\frac{\alpha}{2},\frac{\alpha}{\alpha+2}\right)$, using assumption \eqref{Assuxx}.\\
Since $a+1>0$, it follows from \eqref{eqeta} that we can bound
\begin{equation}
\begin{split}
1&\geq \eta'\left(\frac{|\alpha|}{\eta\|u_{xx}^0\|_{L^{\frac{\alpha}{\alpha+2}}(0,1)}}\right)^{-\frac{\alpha}{2}(a+1)}\left(\left[\left(1+\eta\frac{u_x^0(\xi)}{\alpha}\right)^{-1}\right]^{\xi_2}_{\xi_1}\right)^{-\frac{\alpha}{2}(a+1)}\\
&\geq \eta'\left(\frac{|\alpha|}{\|u_{xx}^0\|_{L^{\frac{\alpha}{\alpha+2}}(0,1)}}\right)\left(\left[\left(1+\eta\frac{u_x^0(\xi)}{\alpha}\right)^{-1}\right]^{\xi_2}_{\xi_1}\right)\eta^{-1},
\end{split}
\end{equation}
for any choice of $\xi_1,\xi_2\in [0,1]$, by the definition of $\alpha$. Choosing $\xi_1,\xi_2$ such that 
\begin{equation}\label{xi1xi2}
0<-\frac{\alpha}{u_x^0(\xi_2)}<-\frac{\alpha}{u_x^0(\xi_1)},
\end{equation}
or, equivalently, since $\alpha<0$,
\begin{equation}\label{xi1xi2explicit}
0<u_x^0(\xi_1)<u_x^0(\xi_2),
\end{equation}
we can bound $t\mapsto \eta(t)$ from above by a positive multiple of the inverse of the monotonically increasing function
\begin{equation}
\Psi(\eta):=\int_0^\eta y^{-1}\left(\left[\left(1+y\frac{u_x^0(\xi)}{\alpha}\right)^{-1}\right]^{\xi_2}_{\xi_1}\right)\, dy.
\end{equation}
First, we note that $\Psi$ is well-defined around $\eta=0$ since
\begin{equation}
\begin{split}
 \lim_{y\to 0}y^{-1}\left(\left[\left(1+y\frac{u_x^0(\xi)}{\alpha}\right)^{-1}\right]^{\xi_2}_{\xi_1}\right)
&=\left(\frac{u_x^0(\xi_2)-u_x^0(\xi_1)}{|\alpha|}\right)<\infty.
\end{split}
\end{equation}
In fact, by partial fraction decomposition, we can write $\Psi$ explicitly as
\begin{equation}
\begin{split}
\Psi(\eta)&=\int_0^\eta y^{-1}\left(\left[\left(1+y\frac{u_x^0(\xi)}{\alpha}\right)^{-1}\right]^{\xi_2}_{\xi_1}\right)\, dy\\
&=\int_0^\eta\frac{1}{y}\frac{1+y\frac{u_x^0(\xi_1)}{\alpha}-\left(1+y\frac{u_x^0(\xi_2)}{\alpha}\right)}{\left(1+y\frac{u_x^0(\xi_1)}{\alpha}\right)\left(1+y\frac{u_x^0(\xi_2)}{\alpha}\right)} \,dy\\
&=\int_0^\eta \frac{u_x^0(\xi_2)-u_x^0(\xi_1)}{|\alpha|\left(1+y\frac{u_x^0(\xi_1)}{\alpha}\right)\left(1+y\frac{u_x^0(\xi_2)}{\alpha}\right)}\, dy\\
&=\int_0^\eta -\frac{u_x^0(\xi_2)}{\alpha\left(1+y\frac{u_x^0(\xi_2)}{\alpha}\right)}+\frac{u_x^0(\xi_1)}{\alpha\left(1+y\frac{u_x^0(\xi_1)}{\alpha}\right)}\, dy\\
&=\left[-\log\left(1+y\frac{u_x^0(\xi_2)}{\alpha}\right)+\log\left(1+y\frac{u_x^0(\xi_1)}{\alpha}\right)\right]_0^\eta\\
&=\log\left(\frac{1+\eta\frac{u_x^0(\xi_1)}{\alpha}}{1+\eta\frac{u_x^0(\xi_2)}{\alpha}}\right),
\end{split}
\end{equation}
which shows that $\Psi\to\infty$ as $\eta\to\eta*:=\frac{|\alpha|}{u_x^0(\xi_2)}$.\\
As $t\mapsto\eta(t)$ is the inverse of the monotonically increasing function $\Psi$, the minimal instance of blow-up for $\Psi$ is the smallest upper bound on $\eta$.
The instance of blow up for $\Psi$ is minimal for any choice of $\xi_2$ such that $u_x^0(\xi_2)=u_{max}$. Indeed, we have that
\begin{equation}
\min_{\xi_2\in[0,1]}\eta^*=\min_{\xi_2\in[0,1]}-\frac{\alpha}{u_x^0(\xi_2)}=\frac{|\alpha|}{u_{max}}.
\end{equation}
Therefore, the function $\eta$ is monotonically increasing and bounded from above by $\frac{|\alpha|}{u_{max}}$ and hence, the solution exists for all times. This proves the claim.\\

\textbf{The case $-1<a$ or $-\infty<\alpha<0$}\\
Applying Bernoulli's inequality to equation \eqref{eqeta} after squaring gives
\begin{equation}\label{inequa-10}
\begin{split}
1&=\left(\eta'(t)\right)^\frac{1}{a+1}\int_0^1\left[1+\eta(t)\frac{u^0_x(s)}{\alpha}\right]^{2\frac{\alpha}{2}}\,ds\\
&=\left(\eta'(t)\right)^\frac{1}{a+1}\int_0^1\left[1+2\eta(t)\frac{u^0_x(s)}{\alpha}+\frac{1}{\alpha^2}\eta^2(t)\Big(u_x^0(s)\Big)^2\right]^{\frac{\alpha}{2}}\,ds\\
&\geq\left(\eta'(t)\right)^\frac{1}{a+1}\int_0^1 1+\eta(t)u^0_x(s)+\frac{1}{2\alpha}\eta^2(t)\Big(u_x^0(s)\Big)^2\,ds\\
&=\left(\eta'(t)\right)^\frac{1}{a+1}\left(1+\frac{1}{2\alpha}\|u_x^0\|_{L^2(0,1)}^2\eta^2(t)\right),
\end{split}
\end{equation}
for all $t\in(0,t^*)$, where in the last step, we have also used Dirichlet or periodic boundary conditions. Since $a+1>0$, the function $x\mapsto x^{a+1}$ is monotonically increasing and it follows from \eqref{inequa-10} that $\eta$ can be bounded from above by a monotonically increasing function which goes to
\begin{equation}
\eta^*= \frac{\sqrt{2|\alpha|}}{\|u_x^0\|_{L^2(0,1)}},
\end{equation}
as $t\to\infty$. Now, we can estimate
\begin{equation}
\begin{split}
1+\eta(t)\frac{u^0_x(s)}{\alpha}&\geq 1-\eta(t)\frac{u_{max}}{|\alpha|}\\
&\geq 1-\frac{\sqrt{2|\alpha|}}{\|u_x^0\|_{L^2(0,1)}}\frac{u_{max}}{|\alpha|}\\
&>0,
\end{split}
\end{equation}
by assumption \eqref{Assa-10}. This proves the claim.\\

\textbf{The case $1\leq a$ or $-1\leq\alpha<0$}\\
Assume, to the contrary, that $1+\eta(t)\frac{u^0_x(\xi)}{\alpha}>0$, while $\eta'(t)>0$, for all $(\xi,t)\in[0,1]\times[0,\infty)$. Since $-1<\alpha<0$ for this parameter range, we can apply Jensen's inequality with the convex function $y\mapsto -y^{-\alpha}$ to equation \eqref{eqeta} to obtain
\begin{equation}
\begin{split}
1&=\left(\eta'(t)\right)^\frac{1}{a+1}\int_0^1\left[1+\eta(t)\frac{u^0_x(s)}{\alpha}\right]^{\alpha}\,ds\\
&\leq \left(\eta'(t)\right)^\frac{1}{a+1}\left(\int_0^1\left[1+\eta(t)\frac{u^0_x(s)}{\alpha}\right]^{-1}\,ds\right)^{-\alpha},\end{split}
\end{equation}
or, equivalently, after taking an $(a+1)$-power,
\begin{equation}\label{inequa1}
1\leq \eta'(t)\left(\int_0^1\left[1+\eta(t)\frac{u^0_x(s)}{\alpha}\right]^{-1}\,ds\right)^2.
\end{equation}

Integrating both sides of the inequality \eqref{inequa1} with respect to $t$ gives
\begin{equation}
t\leq \int_0^{\eta(t)}\left(\int_0^1\left[1+y\frac{u_x^0(s)}{\alpha}\right]^{-1}\right)^2\, dy\leq \int_0^{\frac{|\alpha|}{u_{max}}}\left(\int_0^1\left[1+y\frac{u_x^0(s)}{\alpha}\right]^{-1}\right)^2\, dy<\infty,
\end{equation}
which is a contraction. Here, we have used that $1+\eta(t)\frac{u_x^0(\xi)}{\alpha}>0$ for all $(\xi,t)\in [0,1]\times [0,\infty)$ for the second inequality. This proves the claim.

\end{proof}

\begin{remark}
Generally, the integral expression on the left-hand side of equation \eqref{eqeta} can be expanded in a Binomial series around $\eta=0$ as 
\begin{equation}
\int_0^1\left[1+\eta\frac{u^0_x(s)}{\alpha}\right]^{\alpha}\,ds=\sum_{n=0}^\infty {\alpha\choose n} \alpha^{-n}\left[\int_0^1\Big(u_x^0(s)\Big)^n\, ds\right]\eta^n,
\end{equation}
which, since
\begin{equation}
\lim_{n\to\infty}\left|{\alpha \choose n}\right|^{\frac{1}{n}}=\lim_{n\to\infty}\frac{1}{\Gamma(-\alpha)^{\frac{1}{n}}n^{\frac{\alpha+1}{n}}}=1,
\end{equation}
for $\alpha\notin \mathbb{N}$ (where the Gamma function has poles), defines an analytic function with radius of convergence 
\begin{equation}\label{radiusofconvergence}
\begin{split}
R&=|\alpha|\left\{\limsup_{n\to\infty}\left|\int_0^1\Big(u_x^0(s)\Big)^n\, ds\right|^{\frac{1}{n}}\right\}^{-1}\\
&\geq|\alpha|\|u_x^0\|_{L^\infty(0,1)},
\end{split}
\end{equation}
where the inequality in \eqref{radiusofconvergence} can be deduced by passing to the subsequence of even $n's$.
Since $u_{max}$ and $u_{min}$ can be very different, we see that there is no hope on a general global existence or singularity criterion solely relying upon the failure of analyticity, i.e., of $\eta$ approaching $\|u_x^0\|_{L^\infty(0,1)}$.
\end{remark}

The following theorem presents more refined conditions which guarantee global existence in the parameter range $a>-1$.

\begin{theorem}[Improved Global Existence for $a>-1$]\label{improvedthm}
Let $u\in C\Big(0,t^*; C^{1,Lip}(0,1)\Big)$ be a solution to equation \eqref{GPJa} with $-1<a$, either with Dirichlet boundary conditions \eqref{DirichletBC} or with periodic boundary conditions and zero mean \eqref{periodicBC}, and let  $t^*\in (0,\infty]$ be its maximal existence time. 
\begin{itemize}
	\item For any $n\geq2$, let $\eta^*$ be the minimal positive root (if such a root exists) of the polynomial
	\begin{equation}
	p_n(\eta)=1+\frac{1}{n}\sum_{k=2}^n{n \choose k}\left(\frac{-2}{a+1}\right)^{1-k}\left[\int_0^1\Big(u_x^0(s)\Big)^k\, ds\right]\eta^k,
	\end{equation}
	and assume that
	\begin{equation}
	\eta^*u_{max}<\frac{2}{a+1}.
	\end{equation}
	Then, the solution exists globally in time, i.e., $t^*=\infty$.
\end{itemize}
\end{theorem}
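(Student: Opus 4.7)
The strategy is to extend the quadratic Bernoulli-type bound that was used in the $a>-1$ case of Theorem~\ref{mainthm} to an arbitrary higher-order polynomial bound $p_n$. Rather than applying Bernoulli's inequality directly to the negative exponent $\alpha=-2/(a+1)<0$ (which is available only in a quadratic form), I would write $(1+x)^{\alpha}=\bigl[(1+x)^n\bigr]^{\alpha/n}$ and apply Bernoulli's inequality to the outer power $\alpha/n<0$ after first expanding the inner bracket via the binomial theorem. This trick is what produces sharper polynomial control as $n$ increases, and it reduces to the quadratic case of Theorem~\ref{mainthm} when $n=2$.

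Concretely, after expanding $(1+\eta u_x^0/\alpha)^n$ by the binomial theorem and invoking Bernoulli's inequality $(1+y)^{\alpha/n}\geq 1+(\alpha/n)y$ (valid because $\alpha/n<0$ and $y=(1+\eta u_x^0/\alpha)^n-1>-1$ as long as $F_\xi>0$), I obtain a pointwise lower bound on the integrand of equation~\eqref{eqeta}. Integrating over $s\in[0,1]$ and using $\int_0^1 u_x^0(s)\,ds=0$ (which holds for both Dirichlet and periodic zero-mean boundary conditions) annihilates the $k=1$ term, leaving
\begin{equation*}
\int_0^1\bigl[1+\eta u_x^0(s)/\alpha\bigr]^{\alpha}\,ds \;\geq\; p_n(\eta),
\end{equation*}
with $p_n$ exactly as defined in the theorem statement, after substituting $\alpha^{1-k}=(-2/(a+1))^{1-k}$.

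Substituting this lower bound into equation~\eqref{eqeta} and using $a+1>0$ gives the differential inequality $\eta'(t)\leq p_n(\eta(t))^{-(a+1)}$ for every $t$ with $\eta(t)\in[0,\eta^*)$. A comparison with the auxiliary ODE $\tilde\eta'=p_n(\tilde\eta)^{-(a+1)}$, $\tilde\eta(0)=0$, then confines $\eta$ to stay strictly below $\eta^*$, in complete analogy with the argument employed in the $-1<a$ case of Theorem~\ref{mainthm}. The hypothesis $\eta^*u_{max}<2/(a+1)=|\alpha|$ is precisely what is needed to ensure
\begin{equation*}
1+\eta(t)u_x^0(\xi)/\alpha \;\geq\; 1-\eta(t)u_{max}/|\alpha| \;>\; 1-\eta^*u_{max}/|\alpha| \;>\; 0
\end{equation*}
uniformly in $(\xi,t)$, so that $F_\xi$ as given by formula \eqref{Fxi} remains bounded and strictly positive for all time, and the flow map stays a diffeomorphism. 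Since the solution $u$ inherits its regularity from $F$, this yields $t^*=\infty$.

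The main obstacle will be the comparison step near $\eta=\eta^*$: because $p_n(\eta^*)=0$, the upper bound $p_n(\eta)^{-(a+1)}$ blows up there, and one has to argue carefully that $\eta$ cannot reach $\eta^*$ within the maximal existence interval. The resolution, parallel to Theorem~\ref{mainthm}, exploits the fact that Bernoulli's inequality is strict whenever $\eta u_x^0/\alpha\neq 0$, so that the intermediate bound is never saturated and the strict inequality $\eta(t)<\eta^*$ propagates in $t$. The remaining algebraic verifications (binomial bookkeeping, cancellation of the linear term, and identifying $p_n$ in the form stated) are routine.
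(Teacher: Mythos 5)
Your route to the polynomial bound is exactly the paper's: write $(1+x)^{\alpha}=\big[(1+x)^n\big]^{\alpha/n}$ with $\alpha=-2/(a+1)<0$, expand the inner bracket binomially, apply Bernoulli's inequality to the outer exponent $\alpha/n<0$, integrate in $s$, and use the zero mean of $u_x^0$ to cancel the $k=1$ term, arriving at $\int_0^1\big[1+\eta\,u_x^0(s)/\alpha\big]^{\alpha}\,ds\geq p_n(\eta)$. Up to and including the identification of $p_n$, your proposal and the paper coincide step for step.

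The gap is the confinement step, which you correctly flag as the main obstacle but do not resolve. From $1=(\eta')^{1/(a+1)}\int_0^1[\cdots]^{\alpha}\,ds\geq(\eta')^{1/(a+1)}p_n(\eta)$ and $a+1>0$ you obtain $\eta'\leq p_n(\eta)^{-(a+1)}$ wherever $p_n(\eta)>0$. This upper bound on $\eta'$ tends to $+\infty$ as $\eta\uparrow\eta^*$, i.e.\ it degenerates exactly where you need it: separating variables yields only $\int_0^{\eta(t)}p_n(y)^{a+1}\,dy\leq t$, and since $p_n(\,\cdot\,)^{a+1}$ vanishes at $\eta^*$ the left-hand side never exceeds the finite number $\int_0^{\eta^*}p_n(y)^{a+1}\,dy$; beyond that finite time the inequality places no obstruction to $\eta$ crossing $\eta^*$. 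Your proposed repair --- that Bernoulli's inequality is strict, so the strict inequality $\eta(t)<\eta^*$ ``propagates'' --- does not work: a strict but non-uniform differential inequality is not a barrier (compare $\eta'(t)<2$, $\eta(0)=0$, which is strict for every $t$ yet allows $\eta$ to reach any prescribed level). To close the argument one needs an upper bound on $\eta'$ that \emph{vanishes} as $\eta$ approaches the critical value $2/((a+1)u_{max})$, equivalently a proof that $\int_0^{2/((a+1)u_{max})}I(y)^{a+1}\,dy=\infty$ with $I(\eta)$ the integral in \eqref{eqeta}; the lower bound $I\geq p_n$ points in the wrong direction for this. You should be aware that the paper's own proof of this theorem (and of the corresponding case \eqref{Assa-10} of Theorem \ref{mainthm}) contains the identical lacuna --- it asserts that $\eta$ ``can be bounded from above by a monotonically increasing function which goes to $\eta^*$ as $t\to\infty$,'' whereas the natural comparison solution of $\tilde\eta'=p_n(\tilde\eta)^{-(a+1)}$ in fact reaches $\eta^*$ in finite time --- so your proposal reproduces the paper's argument, gap included, rather than repairing it.
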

\begin{proof}
The proof is completely analogous to the proof of the third case in Theorem \ref{mainthm}. Indeed, we can apply Bernoulli's inequality after taking an $n^{th}$ power in the integral of equation \eqref{eqeta} to obtain
\begin{equation}
\begin{split}
1&=\left(\eta'(t)\right)^\frac{1}{a+1}\int_0^1\left[1+\eta(t)\frac{u^0_x(s)}{\alpha}\right]^{n\frac{\alpha}{n}}\,ds\\
&\geq \left(\eta'(t)\right)^\frac{1}{a+1}\left(1+\frac{1}{n}\sum_{k=2}^n{n \choose k}\alpha^{1-k}\left[\int_0^1\Big(u_x^0(s)\Big)^k\, ds\right]\eta^k\right),
\end{split}
\end{equation}
implying, since $x\mapsto x^{a+1}$ is monotonically increasing, that $\eta$ can be bounded from above by a monotonically increasing function which can be bounded from above by $\eta^*$. Since, by assumption $\eta^*u_{max}<-\alpha$, the claim readily follows. 
\end{proof}


\begin{remark}
If, in addition to the assumptions of Theorem \ref{improvedthm}, the initial velocity field $u^0$ is even, i.e.,
\begin{equation}
u^0\left(\frac{1}{2}-x\right)=u^0\left(\frac{1}{2}+x\right),\quad x\in\left[0,\frac{1}{2}\right],
\end{equation}
or odd, i.e.,
\begin{equation}
u^0\left(\frac{1}{2}-x\right)=-u^0\left(\frac{1}{2}+x\right),\quad x\in\left[0,\frac{1}{2}\right],
\end{equation}
then less restrictive assumptions can be made to guarantee global existence.
\end{remark}

\begin{example}\label{example}
Let us consider the initial condition $u_0(x)=\gamma x(x-1)$, for which finite-time blow up was proved in \cite{childress_ierley_spiegel_young_1989}, once again and contrast it to assumption \eqref{Assa-10}. We calculate
\begin{equation}
\begin{split}
\frac{1}{\sqrt{a+1}}\|u_x^0\|_{L^2(0,1)}&=\sqrt{\frac{1}{2}\int_0^1\gamma^2(2x-1)^2\,dx}\\
&=\frac{\gamma}{\sqrt{3}},
\end{split}
\end{equation}
while $u_{max}=\gamma$, which shows that the condition \eqref{Assa-10} is not satisfied for this initial condition.
\end{example}
\begin{example}[$a\geq1$, Proudman--Johnson-type singularity]
Consider the GPJ equation with $a\geq1$. In their paper \cite{childress_ierley_spiegel_young_1989}, the authors analyzed different blow-up scenarios for different initial conditions, cf. also Example \ref{examplea=1}. Here, we consider the initial condition
\begin{equation}\label{ua=1}
u_x^0(x)=
\begin{cases}\gamma-4\gamma x\qquad&\text{ for } 0<x<\frac{1}{2}\\
4\gamma x-3\gamma\qquad&\text{ for } \frac{1}{2}<x<1,
\end{cases}
\end{equation}
for $\gamma\in\mathbb{R}$, such that $u^0$ satisfies either Dirichlet boundary conditions or is mean-free. The Fourier series of $u_x^0$ is given by
\begin{equation}\label{Fourierexample}
u_x^0(x)=\frac{\gamma}{2\pi^2}\sum_{n\in\mathbb{Z}^*}\frac{(e^{\pi\ri n}-1)[2(e^{\pi\ri n}-1)-\pi\ri n(e^{\pi\ri n}+1)]}{n^2}e^{2\pi\ri nx},
\end{equation}
and since $u_x^0$ is Lipschitz continuous, we find that the Fourier series \eqref{Fourierexample} converges uniformly, cf. Figure \ref{FigAbsandmode3}. Hence, we can choose a sufficiently high truncation of \eqref{Fourierexample} to obtain a smooth (even analytic) initial condition that satisfies the singularity criterion in Theorem \ref{mainthm}.\\
\begin{figure}
	\includegraphics[scale=0.6]{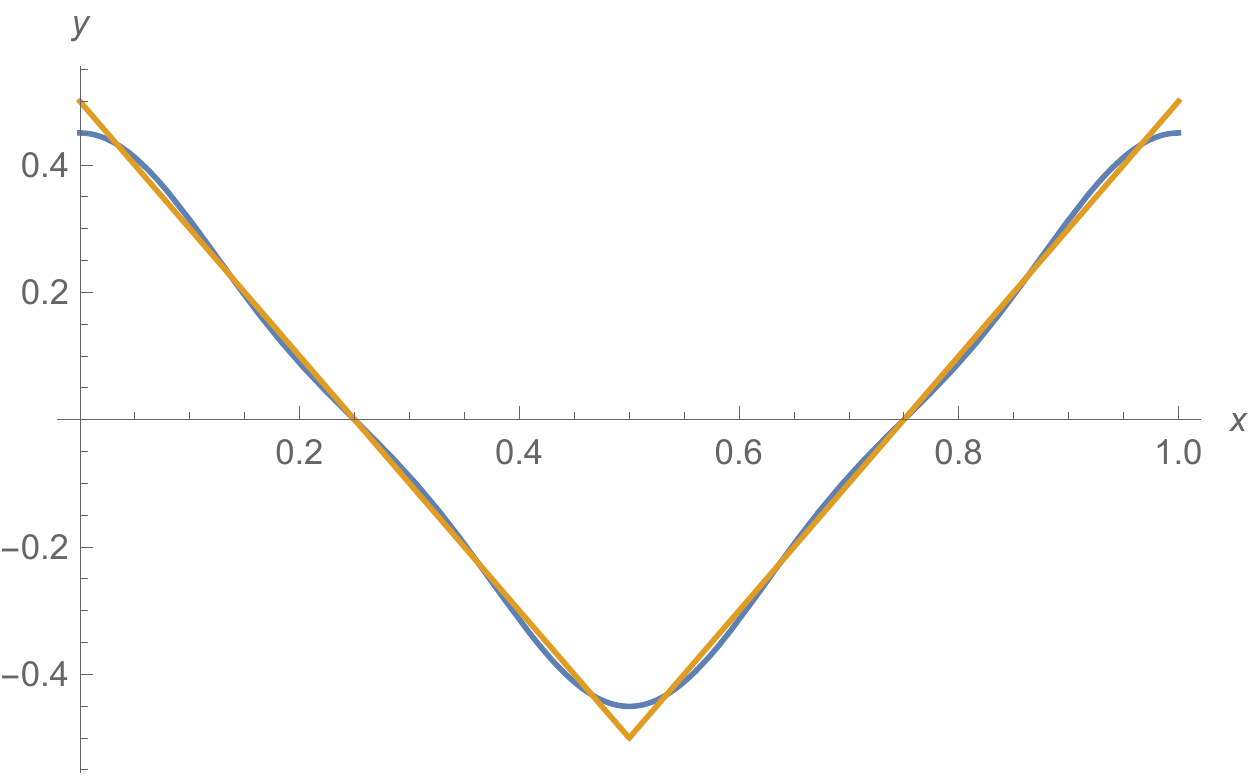}
	\caption{Initial condition \eqref{ua=1} for $\gamma=0.5$ together with its three-mode truncation.} 
	\label{FigAbsandmode3}
\end{figure}
Indeed, equation \eqref{eqeta} can be integrated explicitly to
\begin{equation}\label{intexample}
\begin{split}1&=\sqrt{\eta'(t)}\int_0^1\frac{1}{1-\frac{a+1}{2}\eta(t)u^0_x(s)}\,ds\\
&=\sqrt{\eta'(t)}\left(\int_0^{\frac{1}{2}}\frac{ds}{1-\frac{a+1}{2}\gamma\eta(t)+2(a+1)\gamma\eta(t)s}+\int_{\frac{1}{2}}^1\frac{ds}{1+3\frac{a+1}{2}\gamma\eta(t)-2(a+1)\gamma\eta(t)s}\right)\\
&=\sqrt{\eta'(t)}\left(\frac{1}{2(a+1)\gamma \eta(t)}\left[\log\left(1-\frac{a+1}{2}\gamma\eta(t)+2(a+1)\gamma\eta(t)s\right)\right]_{0}^{\frac{1}{2}}\right.\\
&\qquad\qquad\qquad\qquad\left.-\frac{1}{2(a+1)\gamma \eta(t)}\left[\log\left(1+3\frac{a+1}{2}\gamma\eta(t)-2(a+1)\gamma\eta(t)s\right)\right]_{\frac{1}{2}}^1\right)\\
&=\frac{\sqrt{\eta'(t)}}{2(a+1)\gamma \eta(t)}\left[\log\left(1+\frac{a+1}{2}\gamma \eta(t)\right)-\log\left(1-\frac{a+1}{2}\gamma \eta(t)\right)\right.\\
&\qquad\qquad\qquad\qquad\left.-\log\left(1-\frac{a+1}{2}\gamma \eta(t)\right)+\log\left(1+\frac{a+1}{2}\gamma \eta(t)\right)\right]\\
&=\frac{\sqrt{\eta'(t)}}{2(a+1)\gamma\eta(t)}\log\left|\frac{1+\frac{a+1}{2}\gamma\eta(t)}{1-\frac{a+1}{2}\gamma\eta(t)}\right|,
\end{split}
\end{equation}
and, since the logarithmic expression in \eqref{intexample} is square-integrable, it follows that $\eta$ becomes unbounded and hence, the solution becomes singular in finite time. The blow-up time depends on $a$ and $\gamma$.\\
For a sufficiently high modular truncation, we have also found a smooth (in fact real analytic) initial condition which leads to formation of a singularity for the parameter value $a>1$. This answers a question by Okamoto \cite{Okamotopresentation}.\\
Note, however, that for the one-mode truncation
\begin{equation}\label{initialcos}
u_x^0=\frac{8\gamma}{\pi^2}\cos(2\pi x),
\end{equation}
equation \eqref{eqeta} takes the form
\begin{equation}\label{intexample2}
\begin{split}1&=\sqrt{\eta'(t)}\int_0^1\frac{1}{1-\eta(t)u^0_x(s)}\,ds\\
&=\sqrt{\eta'(t)}\int_0^1\frac{ds}{1-\eta(t)\frac{8\gamma}{\pi^2}\cos(2\pi s)}\\
&=\sqrt{\eta'(t)}\frac{1}{\pi}\int_0^\infty\frac{ds}{1-\eta(t)\frac{8\gamma}{\pi^2}+\left(1+\eta(t)\frac{8\gamma}{\pi^2}\right)s^2}\\
&=\sqrt{\eta'(t)}\frac{1}{1-\eta(t)\frac{8\gamma}{\pi^2}},
\end{split}
\end{equation}
and hence, $\eta(t)< \frac{8\gamma}{\pi^2}$ for all $t\in(0,\infty)$. In particular, the solution with initial condition \eqref{initialcos} exists for all times.\\
Also, we can check condition \eqref{Assa-10} for the initial condition \eqref{initialcos}. Indeed, we find that
\begin{equation}
u_{max}=\frac{8\gamma}{\pi^2}>\frac{8\gamma}{\pi^2}\frac{1}{\sqrt{2}\sqrt{a+1}}=\frac{1}{\sqrt{a+1}}\|u_x^0\|_{L^2(0,1)},
\end{equation}
which shows that condition \eqref{Assa-10} is violated for $a$ sufficiently large.
\end{example}

\section{Summary and Further Perspectives}
We proved finite-time existence criteria as well as global existence criteria for the GPJ equation in dependence of the parameter $a$. We strengthened existing conditions on the formation of a singularity and provided new conditions depending on properties of the initial velocity profile. In particular, we showed that for $a>1$  solutions with sufficiently well behave initial conditions will exist for all times, while sufficiently singular - but still smooth - initial conditions will lead to the formation of a finite-time singularity. This behavior is exemplified for a truncation of a zigzag function. In the appendix, we also gave a physical derivation of the GPJ equation from the compressible, two-dimensional Euler equations.\\
While the finite-time existence results in, e.g., \cite{Sarria2013, SarriaSaxton2013} also specified the nature of the singularity, such as blow-up in the $L^p$-norm of $u_x$, Theorem \ref{mainthm} only guarantees that the solution cannot be extended, but does not give further information on the nature of the singularity. Does the solution blow up or does the solution lose its (Lipschitz) regularity, i.e., is there blow-up in its derivatives? In \cite{Sarria2013}, the authors showed, for $a=1$, that there exist smooth initial data such that the solutions exist globally. Since their results were obtained with quite different techniques, it would be interesting to compare the singularity and global existences results in Theorem \ref{mainthm} for $a\geq 1$ and $a>-1$ respectively to the case of $a=1$ in \cite{Sarria2013}\\
Also, it would be interesting to extend the flow map approach laid out in the present paper to also give information on how a solution becomes singular. This will, however, also require information on the second derivative of $t\mapsto\eta(t)$, which has to be derived from new estimates.\\

\textbf{Acknowledgments}. \\
	The author would like to thank the anonymous reviewer for several useful comments and suggestions.

\section{Appendix: Physical Derivation of the GPJ Equation}\label{Appendix}
In this section, we provide a physical derivation of the GPJ equation for a general value of $a$. For special values of $a$, the GPJ equation is already known to have physical applications, cf. \cite{okamoto2000}. We will find that the GPJ equations model the dynamics of an inviscid, compressible fluid close to a wall. This is an immediate generalization of the initial motivation of the Proudman--Johnson equation, modeling inviscid, incompressible fluid motion close a wall, cf. \cite{childress_ierley_spiegel_young_1989}.\\
Consider the three-dimensional Euler equations for an isentropic, compressible fluid in vorticity form, \cite[p.24]{chorin2012mathematical},
\begin{equation}\label{Euler}
\frac{\partial}{\partial t}\left(\frac{\boldsymbol{\omega}}{\rho}\right)+(\mathbf{u}\cdot\nabla)\left(\frac{\boldsymbol{\omega}}{\rho}\right)=\left(\frac{\boldsymbol{\omega}}{\rho}\cdot\nabla\right)\mathbf{u},
\end{equation}
together with the equation of mass conservation
\begin{equation}
\frac{\partial\rho}{\partial t}+\nabla\cdot(\rho\mathbf{u})=0,
\end{equation}
for a three-dimensional velocity field $\mathbf{u}:\mathbb{R}^3\times (0,t^*)\to\mathbb{R}^3,\quad (x,y,z:t)\mapsto\mathbf{u}(x,y,z;t)$, the three-dimensional vorticity $\boldsymbol{\omega}=\nabla\times\mathbf{u}$ and the non-negative mass density $\rho:\mathbb{R}^3\times(0,t^*)\to\mathbb{R}$. Here, we denoted the minimum of the maximal existence times of $\mathbf{u}$ and $\rho$ as $t^*$.\\
After differentiating the product expressions, equation \eqref{Euler} can be written equivalently in terms of the material derivative $\frac{D}{Dt}:=\frac{\partial}{\partial t}+\mathbf{u}\cdot\nabla$
as 
\begin{equation}\label{Eulermaterial}
\frac{1}{\rho}\frac{D\boldsymbol{\omega}}{Dt}-\frac{1}{\rho^2}\frac{D\rho}{Dt}\boldsymbol{\omega}=\left(\frac{\boldsymbol{\omega}}{\rho}\cdot\nabla\right)\mathbf{u}.
\end{equation}
Since the mass density $\rho$ is non-negative, there exists a scalar function $R:\mathbb{R}^3\times(0,t^*)\to\mathbb{R}$ such that
\begin{equation}\label{exprho}
\rho(x,y,z;t)=e^{R(x,y,z;t)}.
\end{equation}
Assuming a velocity field of the form
\begin{equation}\label{ansatzu}
\mathbf{u}(x,y,z;t)=\Big(u(x,t),\beta u_x(x,t)y,0\Big),
\end{equation}
for $\beta\in\mathbb{R}$ a parameter, an unknown scalar function $u:\mathbb{R}\times(0,t^*)\to\mathbb{R}$ and $y\geq 0$, the three-dimensional vorticity becomes
\begin{equation}
\boldsymbol{\omega}(x,y,z;t)=\Big(0,0,-\beta u_{xx}(x,t)y\Big),
\end{equation}
while the divergence of $\mathbf{u}$ reads
\begin{equation}\label{divu}
\nabla\cdot\mathbf{u}=(1+\beta)u_x.
\end{equation}
Introducing the scalar vorticity
\begin{equation}
\omega(x,y;t)=-\beta u_{xx}(x,t)y,
\end{equation}
equation \eqref{Eulermaterial} simplifies to the scalar equation
\begin{equation}
\frac{1}{\rho}\frac{D\omega}{Dt}-\frac{1}{\rho^2}\frac{D\rho}{Dt}\omega=0,
\end{equation}
or equivalently, in terms of the exponential density \eqref{exprho}, after multiplication by $e^R$,
\begin{equation}\label{Eulerred}
\frac{D\omega}{Dt}=\frac{DR}{Dt}\omega.
\end{equation}
Since, by \eqref{exprho} and by \eqref{divu}, the equation of mass conservation becomes
\begin{equation}
\frac{DR}{Dt}=-\nabla\cdot\mathbf{u}=-(1+\beta)u_x,
\end{equation}
the Euler equations \eqref{Eulerred} take the form
\begin{equation}
\omega_t+u\omega_x+\beta u_x\omega_y=-(1+\beta)u_x\omega,
\end{equation}
or, in terms of the scalar velocity $u$, after division by $-\beta y$,
\begin{equation}\label{GPJxx}
u_{xxt}+uu_{xxx}+(1+2\beta)u_xu_{xx}=0.
\end{equation}
Setting
\begin{equation}
a:=-(1+2\beta),
\end{equation}
we obtain the GPJ equation 
\begin{equation}\label{GPJapendix}
u_{xxt}+uu_{xxx}-au_xu_{xx}=0,
\end{equation}
as defined in \eqref{GPJa}.\\
If we now assume periodic boundary conditions in the ansatz \eqref{ansatzu}, or if we define the scalar velocity $u$ on the interval $[0,1]$ together with Dirichlet boundary conditions, we obtain the according boundary conditions for the GPJ equation.\\
As the velocity field \eqref{ansatzu} becomes unbounded for $y\to\infty$, the GPJ equations give a model for, e.g., near-wall dynamics for compressible, inviscid fluids only for small enough $y$. \\

\bibliographystyle{abbrv}
\bibliography{/Users/floriankogelbauer/Dropbox/Bibs/PDE}

\begin{thebibliography}{10}

\bibitem{bressan2005global}
A.~Bressan and A.~Constantin.
\newblock Global solutions of the {H}unter--{S}axton equation.
\newblock {\em SIAM Journal on Mathematical Analysis}, 37(3):996--1026, 2005.

\bibitem{childress_ierley_spiegel_young_1989}
S.~Childress, G.~R. Ierley, E.~A. Spiegel, and W.~R. Young.
\newblock Blow-up of unsteady two-dimensional {E}uler and {N}avier--{S}tokes
  solutions having stagnation-point form.
\newblock {\em Journal of Fluid Mechanics}, 203:1–22, 1989.

\bibitem{CHO2010392}
C.-H. Cho and M.~Wunsch.
\newblock Global and singular solutions to the generalized
  {P}roudman--{J}ohnson equation.
\newblock {\em Journal of Differential Equations}, 249(2):392 -- 413, 2010.

\bibitem{cho2012global}
C.-H. Cho and M.~Wunsch.
\newblock Global weak solutions to the generalized {P}roudman--{J}ohnson
  equation.
\newblock {\em Communications on Pure \& Applied Analysis}, 11(4), 2012.

\bibitem{chorin2012mathematical}
A.~Chorin and J.~Marsden.
\newblock {\em A Mathematical Introduction to Fluid Mechanics}.
\newblock Universitext. Springer New York, 2012.

\bibitem{ConstantinWunsch2009}
A.~Constantin and M.~Wunsch.
\newblock On the inviscid {P}roudman--{J}ohnson equation.
\newblock {\em Proceedings of The Japan Academy Series A-mathematical
  Sciences}, 85, 2009.

\bibitem{liouville1853equation}
J.~Liouville.
\newblock Sur l'{\'e}quation aux diff{\'e}rences partielles
  $\frac{d^2\log\lambda}{dudv}\pm\frac{\lambda}{2\alpha^2}=0$.
\newblock {\em Journal de Math{\'e}matiques Pures et Appliqu{\'e}es}, pages
  71--72, 1853.

\bibitem{nagayama2002blow}
M.~Nagayama, H.~Okamoto, and J.~Zhu.
\newblock On the blow-up of some similarity solutions of the navier-stokes
  equations.
\newblock In {\em Topics in mathematical fluid mechanics}, pages 137--162,
  2002.

\bibitem{Okamotopresentation}
H.~Okamoto.
\newblock http://www.cscamm.umd.edu/programs/inc06/lectures/igpj.pdf.

\bibitem{Okamoto2009}
H.~Okamoto.
\newblock Well-posedness of the generalized {P}roudman--{J}ohnson equation
  without viscosity.
\newblock {\em Journal of Mathematical Fluid Mechanics}, 11(1):46--59, 2009.

\bibitem{okamoto2000}
H.~Okamoto and J.~Zhu.
\newblock Some similarity solutions of the {N}avier--{S}tokes equations and
  related topics.
\newblock {\em Taiwanese J. Math.}, 4(1):65--103, 03 2000.

\bibitem{Sarria2013}
A.~Sarria and R.~Saxton.
\newblock Blow-up of solutions to the generalized inviscid
  {P}roudman--{J}ohnson equation.
\newblock {\em Journal of Mathematical Fluid Mechanics}, 15(3):493--523, 2013.

\bibitem{SarriaSaxton2013}
A.~Sarria and R.~Saxton.
\newblock The role of initial curvature in solutions to the generalized
  inviscid {P}roudman--{J}ohnson equation.
\newblock {\em Quarterly of Applied Mathematics}, 73, 2013.

\bibitem{Wunsch2011}
M.~Wunsch.
\newblock The generalized {P}roudman--{J}ohnson equation revisited.
\newblock {\em Journal of Mathematical Fluid Mechanics}, 13(1):147--154, 2011.

\end{thebibliography}
\end{document}